\documentclass[preprint]{elsarticle}

\usepackage{times}
\usepackage{lineno}

\journal{Journal of \LaTeX\ Templates}

\usepackage{amstext,amscd,amsmath}
\usepackage{amsthm}
\usepackage{amsfonts}
\usepackage{ifpdf}
\usepackage{graphicx,amssymb}
\usepackage{latexsym,bm}
\usepackage{mathrsfs}
\usepackage{enumerate}
\usepackage{color}

\newtheorem{theorem}{Theorem}[section]
\newtheorem{lemma}[theorem]{Lemma}

\theoremstyle{definition}

\newtheorem{proposition}[theorem]{Proposition}

\theoremstyle{remark}

\newtheorem{corollary}[theorem]{Corollary}

\numberwithin{equation}{section}

\begin{document}

\begin{frontmatter}

\title{The mixed spectral problem for radial Schr\"{o}dinger operators and Paley-Wiener spaces}

\author{Yongjiang Duan$^{1,\dag}$~~~  Yufei Li $^{2,\S}$ ~~~Zeguang Liu$^{2,\ddag}$}

\begin{abstract}
The inverse spectral problem for the radial Schr\"{o}dinger operators on the finite interval is investigated.
The potential is recovered from the given eigenvalues plus its information on a smaller interval.
The method is to discuss the connections between the Paley-Wiener spaces and the potentials, from which we completely determine the potential on the whole interval from the potential on a smaller interval and a set of eigenvalues in terms of the complete exponential systems. 
\end{abstract}

\begin{keyword}
Radial Schr\"{o}dinger operator\sep 
Inverse spectral problem\sep 
Paley-Wiener space
\MSC[2020] 34A55\sep 34B24\sep 47E05
\end{keyword}

\end{frontmatter}

\renewcommand{\thefootnote}{\fnsymbol{footnote}}

\noindent\footnote[0]{\small {$^1$ Department of Mathematics, Jinan University, Guangzhou,  510632, China
}}
\noindent\footnote[0]{\small{$^{2}$ School of Mathematics and Statistics, Northeast Normal University, Changchun, 130024, China}} \\

\footnote[0] {$\dag$ yjduan@jnu.edu.cn}

\footnote[0]{$\S $ liyf495@nenu.edu.cn}

\footnote[0]{$\ddag$ Corresponding author: Zeguang Liu, liuzeguang205@nenu.edu.cn}

\vspace{-0.15in}

\section{Introduction}

Consider the radial Schr\"{o}dinger operators (or perturbed Bessel operators) (see, e.g., \cite{Tes2009, Wei1987})
\begin{align*}
L(q)(u)
:=-u^{\prime \prime}(x)+\frac{\ell(\ell+1)}{x^{2}}u(x)+q(x)u(x),
\end{align*}
defined on the interval $(0,1]$ and subject to the boundary conditions
\begin{align}\label{e1.2}
\lim_{x \rightarrow 0}\frac{u(x)}{x^{\ell+1}}<\infty, 
\qquad
u^{\prime}(1)+\beta u(1)=0,
\end{align}
where $\ell>-1/2$, the potential $q$ is a real-valued function in $L^{1}(0,1)$, and the parameter $\beta\in\mathbb{R}\cup\{\infty\}$. If $\beta=\infty$, then the boundary condition $u^{\prime}(1)+\beta u(1)=0$ turns out to be the Dirichlet condition $u(1)=0$. Note that when $\ell=0$, $L(q)$ reduces to a Sturm-Liouville operator
with the Dirichlet boundary condition at $x = 0$.

The operator $L(q)$ is self-adjoint on $L^2(0,1)$, see \cite[Theorem 2.4]{KST2010}. The associated spectral problem is defined as follows: 
find $\lambda \in \mathbb{C}$ and a nonzero function $u$ satisfying the above boundary conditions \eqref{e1.2} such that
\begin{align}\label{e1.1}
-u^{\prime \prime}(x)+\frac{\ell(\ell+1)}{x^{2}}u(x)+q(x)u(x)= \lambda u.
\end{align}
It is well-known (see \cite[(1.5)]{XYB2023}) that equation \eqref{e1.1} has a unique solution $\phi_{\ell}(\lambda,x,q)$ such that
\begin{align*}
\lim_{x\rightarrow 0}\frac{\phi_{\ell}(\lambda,x,q)}{x^{\ell+1}}
=\frac{\sqrt{\pi}}{2^{\ell+1}\Gamma(\ell+\frac{3}{2})}.
\end{align*}
The operator $L(q)$ corresponding to \eqref{e1.2} is bounded below and has a simple discrete spectrum $\sigma(\ell, q, \beta)$. The eigenvalue $\lambda_{\ell, \beta, n}(q)\in\sigma(\ell, q, \beta)$ (the list was sorted in ascending order) has the following asymptotic behavior:
      \begin{align}
      \sqrt{\lambda_{\ell, \infty, n}(q)}&=\left(n+\frac{\ell}{2}\right) \pi+O\left(\frac{1}{n}\right), ~~n\rightarrow\infty,\label{ew1}\\
      \sqrt{\lambda_{\ell, \beta, n}(q)}&=\left(n+\frac{\ell-1}{2}\right) \pi+O\left(\frac{1}{n}\right), ~~\beta \in \mathbb{R},
      ~~n\rightarrow\infty,\label{ew2}
      \end{align}
      and the corresponding eigenfunction is $\phi_{\ell}(\lambda_{\ell, \beta, n}(q), x, q)$, see \cite[Theorems 2.4 and 2.5]{KST2010}.
Moreover, given $q\in L^{1}(0, 1)$, the equation \eqref{e1.1} has a solution $\phi_{\ell}(\lambda, x, q)$ which is an entire function of $\lambda\in\mathbb{C}$ and  has the asymptotic expansions as $\lambda \rightarrow \infty$,
       \begin{align}
       \phi_{\ell}(\lambda, x, q)
       &=\lambda^{-\frac{\ell+1}{2}}
       \left(\sin \left(\sqrt{\lambda} x-\frac{\ell \pi}{2}\right)+O\left(|\lambda|^{-\frac{1}{2}} e^{x| \textnormal{Im}(\sqrt{\lambda}) \mid}\right)\right),\label{e2.1}\\
       \phi_{\ell}^{\prime}(\lambda, x, q)
       &=\lambda^{-\frac{\ell}{2}}
       \left(\cos \left(\sqrt{\lambda} x-\frac{\ell \pi}{2}\right)+O\left(|\lambda|^{-\frac{1}{2}} e^{x|\textnormal{Im}(\sqrt{\lambda})|}\right)\right),\label{e2.2}
       \end{align}
       for every fixed $x\in(0,1]$, see \cite[Lemma 2.2, (2.24) and (2.25)]{KST2010}. 

Inverse problems for the radial Schr\"{o}dinger operators $L(q)$ on a finite interval have been extensively studied.
It is well known that the potential is uniquely determined by
spectral measure (or the eigenvalues and the corresponding norming constants,
or the Weyl-Titchmarsh $m$-function).
Moreover, the two sets of eigenvalues corresponding to the radial Schr\"{o}dinger operators under different boundary conditions can also uniquely determine
the potential function.
These inverse problems are studied, for example, in \cite{Borg1946,Guli2001,Hat2021,HB1978,Levit1987,Marc1986,PT1987} for the case $\ell = 0$, and in \cite{SRYal2007, Carl1997,Eckh2014,Guli2005,Gasy1965,HS2010,KST2010,Ser2007,LAVS1994} for more general values of $\ell$. Inverse problem theory also includes the use of other types of spectral data to solve the inverse spectral problem for the operator $L(q)$, for more on the theory see \cite{Aceto2008, CS1994, CB2025, Chr1993,XH2025,XY2024}.

The problem considered in this paper originates from a seminal work by Hochstadt and Lieberman on the Sturm-Liouville operator. They introduced the concept of using mixed data to recover the potential, a problem now known as the mixed spectral problem (see \cite{HB1978}). This problem was later extended to the radial Schrödinger operator by Xu, Yang and Bondarenko in \cite{XYB2023}.

\textbf{The mixed spectral problem.}
\emph{Suppose that the potential $q\in L^{p}(0,1)$ is known a priori on the interval $(a, 1)$ for some $a \in(0,1]$. Recover the potential $q(x)$ on $(0, a)$ from the eigenvalues $\{\lambda_n\}_{n=1}^{\infty}$ belonging to the various spectra: $\lambda_{n}\in\sigma(\ell, q, \beta_{n})$ with different values of $\beta_{n}\in\mathbb{R}\cup\{\infty\}$ in the boundary condition \eqref{e1.2}.}

Our motivation for considering this problem mainly comes from the following two points. Firstly, for $\ell=0$,  a series of remarkable studies have been carried out. Borg \cite{Borg1946} proposed that two spectra uniquely determines the potential. Hochstadt and Lieberman \cite{HB1978} showed that the potential is uniquely determined by knowledge of the potential on half of the interval together with a spectra. Gesztesy and Simon \cite{FB2000} extended this result by proving that the potential on a longer subinterval, together with a spectral subset for which the number of eigenvalues is proportional to $a$, also uniquely determines the potential. 
del Rio, Gesztesy and Simon \cite{dGS1997} showed that if the spectra associated with three distinct boundary conditions are known, then any subset of these spectra containing at least two-thirds of the total number of eigenvalues from the three spectra is sufficient to uniquely determine the potential. Horv\'{a}th \cite[Theorem 1.1]{Horv2005} provided a complete characterization of the mixed spectral problem. Recently, Makarov and Poltoratski \cite[Theorem 6]{MP2019} gave a short proof of Horv\'{a}th's result based on the connection between mixed spectral problems for Schrödinger operators and the Beurling–Malliavin problem on completeness of exponential systems. In particular, a simple proof for the ``un-mixed" case $a=1$ was given by Baranov, Belov and Poltoratski \cite[Theorem 2]{BBP2017}, where the main idea to establish the connection between potential and Paley-Wiener space was inspired by the work of Korotyaev \cite{Koro2004}.

Secondly,  for $\ell\in\mathbb{N}$ and a potential $q\in L^{p}(0,1)$ with $p\in(1,\infty)$, Xu, Yang and Bondarenko 
provided a sufficient condition for the mixed spectral problem, and under some additional hypothetical condition, their sufficient condition is also necessary (see \cite[Theorem 1.2 and Theorem 5.2]{XYB2023}). The main tool of their proof is the singular transformation operator representation for the solution of the radial Schr\"{o}dinger equation. 

So a natural question is whether the additional hypothetical condition of \cite{XYB2023} can be removed. For $p=2$, inspired by the idea of \cite{BBP2017} and \cite{MP2019}, we give an answer of the mixed spectral problem without any additional hypothetical condition. To state our results, we need more notations.

Recall \cite[pp.9]{KST2010} that the Weyl function for $L(q)$ is defined by
\begin{align}\label{e1.3}
m(\lambda)=\frac{\phi_{\ell}(\lambda, 1, q)}
{\phi_{\ell}^{\prime}(\lambda, 1, q)},
\end{align}
which is analytic in the upper half-plane and maps the upper half-plane to itself, i.e., it is a Herglotz function. It is well known that $m$ uniquely determines the potential $q$ and the constant $\ell$, see \cite[pp.9]{KST2010}.

We can now state the main result of the paper.

\begin{theorem}\label{t1.1} 
Suppose $\Lambda$ is a sequence of distinct positive real numbers, $\ell\in\mathbb{N}_{0}$ and $a \in(0,1]$. Then the following statements are equivalent.

\begin{enumerate}

\item [\textnormal{(1)}] If $q$, $\tilde{q} \in L^{2}(0,1)$ satisfy $q=\tilde{q}$ on $(a,1)$ and $m=\tilde{m}$ on $\Lambda$, then $q=\tilde{q}$ a.e. on $(0,1)$.
    
\item [\textnormal{(2)}] The system 
$\{e^{i\lambda t}\mid\lambda\in\pm\sqrt{\Lambda}\cup\{a_n\}_{n=1}^{2\ell+2}$ is complete in $L^{2}(-2a,2a)$,
    
\end{enumerate}
where $\tilde{m}$ is the Weyl function for $L(\tilde{q})$, 
$\pm\sqrt{\Lambda}=\{z: z^{2} \in \Lambda\}$,
$a_n\neq a_k$ for $n\neq k$, and $\pm\sqrt{\Lambda}\cap\{a_n\}_{n=1}^{2\ell+2}=\emptyset$.

In particular, if $a = 1$, then \textnormal{(1)} reduces to the following: If $q$, $\tilde{q} \in L^{2}(0,1)$ with $m=\tilde{m}$ on $\Lambda$, then $q=\tilde{q}$ a.e. on $(0,1)$.
\end{theorem}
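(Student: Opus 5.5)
Following Baranov--Belov--Poltoratski and Makarov--Poltoratski, I would translate the uniqueness question into a statement about a Paley--Wiener-type space. The starting point is the transformation-operator representation of the regular solution. For $q\in L^2$ and $\ell\in\mathbb{N}_0$, set $k=\sqrt{\lambda}$. I would write $\phi_\ell(\lambda,x,q)=\phi_\ell(\lambda,x,0)+\int_0^x K(x,t)\phi_\ell(\lambda,t,0)\,dt$. Here $\phi_\ell(\lambda,x,0)=\lambda^{-(\ell+1)/2}\times$(a Riccati--Bessel function), so $\lambda^{(\ell+1)/2}\phi_\ell(\lambda,x,0)$ is an explicit entire function of $k$ of exponential type $x$: a combination of $\sin(kx-\ell\pi/2)$ and $\cos$ times polynomials in $1/k$. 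After multiplying by $k^{\ell+1}$ (or $k^{2\ell+2}$ where needed), the difference of two such solutions becomes an element of $PW_{a}$ plus a polynomial correction of degree at most $2\ell+1$. This correction is what produces the extra $2\ell+2$ points $a_n$ in the statement.

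\textbf{Step 1 (reduction).} I would take $q,\tilde q$ with $q=\tilde q$ on $(a,1)$. Using the standard Hochstadt--Lieberman argument, $m=\tilde m$ on $\Lambda$ is equivalent to
\[
F(\lambda):=\phi_\ell(\lambda,a,q)\phi_\ell'(\lambda,a,\tilde q)-\phi_\ell'(\lambda,a,q)\phi_\ell(\lambda,a,\tilde q)
\]
vanishing on $\Lambda$: one propagates the solutions from $a$ to $1$ with the same fundamental matrix, so the Wronskian at $1$ equals $F$. Conversely, $q=\tilde q$ on $(0,a)$ iff $F\equiv 0$. The inverse direction is by uniqueness for the Weyl function on $(0,a)$, i.e.\ the $a=1$ Borg--Marchenko-type uniqueness recalled after \eqref{e1.3}, rescaled.

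\textbf{Step 2 (the space).} Using \eqref{e2.1}--\eqref{e2.2} and the transformation-operator representation, I would show that $G(k):=k^{2\ell+2}F(k^2)$ is an even entire function of exponential type $\le 2a$. I would also show $G(k)=\int_{-2a}^{2a}g(t)e^{ikt}\,dt+P(k)$ with $g\in L^2$ and $P$ an even polynomial of degree $\le 2\ell+1$. The main term $\sin(ka-\ell\pi/2)\cos(\dots)-\cos\sin$ cancels, leaving $L^2$-Fourier transforms, which is where $q\in L^2$ is used. Equivalently, $G$ lies in the space $k^{2\ell+2}$-divided form $\{h/\prod(k-a_n): h\in PW_{2a}\}$ up to the choice of points. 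The key surjectivity claim is that, as $q,\tilde q$ range over $L^2(0,a)$, the functions $F$ fill out (a neighbourhood of $0$ in) the whole space of such even functions. This is the analogue of \cite{BBP2017}, Theorem 2. I expect it to be the main obstacle, since one must show that the nonlinear map $q\mapsto F$ has enough range, or at least that its linearization at $\tilde q$ does. I would first try linearization: take $\tilde q$ fixed and $q=\tilde q+\varepsilon r$. Then $\partial_\varepsilon F|_{0}=\int_0^a r(x)\phi_\ell(\lambda,x,\tilde q)^2\,dx$, and I would show that the products of squares $\{\phi^2\}$ span exactly the needed space. This uses that squares of Riccati--Bessel functions are $\frac{1}{2}(1-\cos 2kx)$ plus lower-order polynomial terms in $1/k$, whose span is $PW_{2a}$ modulo a $(2\ell+2)$-dimensional polynomial part.

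\textbf{Step 3 (equivalence).} For (2)$\Rightarrow$(1): if $F$ vanishes on $\Lambda$, then $G$ vanishes on $\pm\sqrt\Lambda$. I would consider $H(k)=(G(k)-R(k))$, with $R$ the polynomial interpolating $G$ at $\{a_n\}$, divided by $\prod(k-a_n)$. Then $H\in PW_{2a}$ vanishes on $\pm\sqrt\Lambda$, and completeness forces the corresponding functional to vanish, so $G\equiv 0$ and hence $q=\tilde q$. For (1)$\Rightarrow$(2): if the system is incomplete, there is a nonzero element of the extended space vanishing on $\pm\sqrt\Lambda\cup\{a_n\}$, which can be symmetrized to an even function. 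By Step 2 (the local surjectivity of a Gelfand--Levitan-type construction, as in Makarov--Poltoratski), one realizes $\varepsilon$ times this function as an $F$ coming from a pair $q\neq\tilde q$ agreeing on $(a,1)$. Then $m=\tilde m$ on $\Lambda$ contradicts (1). The case $a=1$ is immediate since there is no constraint on $(a,1)$.
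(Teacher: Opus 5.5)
\textbf{There is a genuine gap in the hard direction (1)$\Rightarrow$(2), and an error in (2)$\Rightarrow$(1).}

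\emph{The direction (1)$\Rightarrow$(2).} Here everything hinges on \emph{realizing} a prescribed nonzero even function $G^*$ as $k^{2\ell+2}F(k^2)$ for an actual pair $q\neq\tilde q$. The function $G^*$ vanishes on $\pm\sqrt\Lambda$ and to order $2\ell+2$ at $0$. You only compute the derivative $r\mapsto\int_0^a r\,\phi_\ell(\lambda,x,\tilde q)^2dx$. To pass from this to existence you need an inverse function theorem, and therefore two facts.

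(i) The map $q\mapsto k^{2\ell+2}F_q$ must be $C^1$ from $L^2(0,a)$ into the Banach space $X$. Here $X$ consists of the even functions $g+C$ ($g\in PW_{2a}$, $C\in\mathbb{C}$) vanishing to order $2\ell+2$ at $0$, normed by $\|g\|_2+|C|$. This needs $L^2$ control of remainders, not the pointwise bounds \eqref{e2.1}--\eqref{e2.2}.

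(ii) The derivative must map $L^2(0,a)$ \emph{onto} $X$ with bounded inverse. Density of the span of the squares is not enough.

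For $\ell\ge1$, (ii) is the whole difficulty. The lower-order terms of the squared Riccati--Bessel functions produce non-compact Hardy-type operators. For instance, for $\ell=1$ and $\tilde q=0$, the cosine-transform density of the derivative is, up to a constant factor, $\frac12 r(u)-2u\int_u^a r(x)x^{-2}\,dx$. This operator is injective but its range has codimension one, so it is not a compact perturbation of $\frac12 I$. In general you must prove by hand that the range is exactly the codimension-$\ell$ subspace cut out by the moment conditions at $0$. For general $\tilde q$ you must also push this through transformation operators. Appealing to ``a Gelfand--Levitan construction as in Makarov--Poltoratski'' presupposes precisely this $\ell\ge1$ statement.

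The paper avoids linearization entirely. Proposition~\ref{l3.6} builds $A_1H_{e2}=\sin(z-\ell\pi/2)+h$ and $B_1H_{e1}=\cos(z-\ell\pi/2)+g$ from a small even $f$ by sinc interpolation. It uses Rouch\'e and the smallness of $\|f\|_2$ to get interlacing zeros with the asymptotics \eqref{e2.9}, \eqref{e2.11}. It then obtains $q$ from the two-spectra existence result (Corollary~\ref{c2.5}) and the product formulas. The key trick is to pair each potential against the \emph{fixed} reference vector $\bigl(\cos(az-\ell\pi/2)/H_{e1}(az),\,\sin(az-\ell\pi/2)/H_{e2}(az)\bigr)$ rather than against $\tilde q$. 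Applying the construction to $f_{\mathrm{e}}$ and $f_{\mathrm{e}}/2$ gives $q\neq\tilde q$ whose vectors $(A,B)$ and $(\tilde A,\tilde B)$ satisfy the same nontrivial linear relation at each point of $\pm\sqrt\Lambda$. Hence their Wronskian vanishes there.

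\emph{The direction (2)$\Rightarrow$(1).} Your $H=(G-R)/\prod(k-a_n)$ does not vanish on $\pm\sqrt\Lambda$ (there $H=-R/\prod(k-a_n)$), nor at the $a_n$. So completeness cannot be applied to it.

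The correct object is $\Phi(k)=F(k^2)\prod_{n=1}^{2\ell+2}(k-a_n)$. It is entire and vanishes on $\pm\sqrt\Lambda\cup\{a_n\}$. Put $Q=\prod(k-a_n)-k^{2\ell+2}$. Then $\Phi=G+F(k^2)Q(k)$, and $F(k^2)Q(k)=G(k)Q(k)/k^{2\ell+2}=O(|k|^{-1})$ on $\mathbb{R}$, so $\Phi-C\in PW_{2a}$. It follows that $C=0$ and $\Phi\equiv0$.

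This argument uses that the polynomial part of $G$ is a \emph{constant}. That is true, since \eqref{e2.1}--\eqref{e2.2} give $G=O(1)$ on $\mathbb{R}$. With a polynomial part of positive degree, as you allow, $\Phi$ would not lie in $PW_{2a}+\mathbb{C}$. So the points $a_n$ compensate the decay $F(k^2)=O(|k|^{-2\ell-2})$, not a polynomial correction.

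Finally, your Step 2 claim that $G-C\in PW_{2a}$ is only asserted. It can be proved with transformation operators and Hardy's inequality. The paper instead reduces to $\ell=0$ by adjoining $\ell/2$ points to each spectrum (Corollary~\ref{c2.5}) and quoting Korotyaev's result.
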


Theorem \ref{t1.1} can be restated as follows.

\begin{theorem}\label{c1.1} 
Suppose  $\ell\in\mathbb{N}_{0}$ and $a\in(0,1]$. Suppose $\lambda_{n}\in\sigma(\ell,q,\beta_{n})
\cap\sigma(\ell,\tilde{q}$, $\beta_{n}),\beta_{n}\in\mathbb{R} \cup\{\infty\}$ and $\lambda_{n}>0$ for $n\in \mathbb{N}$. Write $\Lambda=\{\lambda_{n}\}_{n=1}^{\infty}$.
Then the following statements are equivalent.
\begin{enumerate}

\item [\textnormal{(1)}] If $q$, $\tilde{q} \in L^{2}(0,1)$ satisfy $q=\tilde{q}$ on $(a,1)$, then $q=\tilde{q}$ a.e. on $(0,1)$.
    
\item [\textnormal{(2)}]The system 
$\{e^{i\lambda t}\mid\lambda\in\pm\sqrt{\Lambda}\cup\{a_n\}_{n=1}^{2\ell+2}$ is complete in $L^{2}(-2a,2a)$.
    
\item [\textnormal{(3)}] The system 
$\{t^{2 k}\mid k=0,1,2,\cdots,\ell\}\cup\{\cos(2\sqrt{\lambda} t)\mid\lambda\in \Lambda\}$ is complete in $L^{2}(0,a)$.
\end{enumerate}

\end{theorem}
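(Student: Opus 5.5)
Theorem \ref{c1.1} follows from Theorem \ref{t1.1} together with an elementary equivalence between (2) and (3). I would prove (1)$\Leftrightarrow$(2) by reduction to Theorem \ref{t1.1}, and then (2)$\Leftrightarrow$(3) by a symmetrization argument in $L^2(-2a,2a)$.

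For (1)$\Leftrightarrow$(2), the hypothesis $\lambda_n\in\sigma(\ell,q,\beta_n)\cap\sigma(\ell,\tilde q,\beta_n)$ should translate into $m(\lambda_n)=\tilde m(\lambda_n)$. Indeed, $\lambda_n\in\sigma(\ell,q,\beta_n)$ means $\phi_\ell'(\lambda_n,1,q)+\beta_n\phi_\ell(\lambda_n,1,q)=0$. When $\beta_n\in\mathbb R$ this gives $m(\lambda_n)=-1/\beta_n$, with the convention $m(\lambda_n)=\infty$ if $\beta_n=0$. When $\beta_n=\infty$ it gives $m(\lambda_n)=0$. The same values hold for $\tilde m$, so $m=\tilde m$ on $\Lambda$, understood as equality in $\mathbb R\cup\{\infty\}$ at poles. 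Conversely, equality of $m$ and $\tilde m$ on $\Lambda$ puts each $\lambda_n$ in a common spectrum, with $\beta_n=-1/m(\lambda_n)$. Hence statement (1) of Theorem \ref{c1.1} coincides with statement (1) of Theorem \ref{t1.1}, and Theorem \ref{t1.1} gives (1)$\Leftrightarrow$(2). The one point to check is that Theorem \ref{t1.1}'s notion ``$m=\tilde m$ on $\Lambda$'' includes the pole case.

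For (2)$\Leftrightarrow$(3), I work with the duality between completeness and annihilators. The system in (2) fails to be complete exactly when there is a nonzero $f\in L^2(-2a,2a)$ orthogonal to every function in it. Equivalently, the Fourier transform $F(z)=\int_{-2a}^{2a} f(t)e^{-izt}\,dt$ is a nonzero function in the Paley--Wiener space $PW_{2a}$ vanishing on $\pm\sqrt\Lambda$ and at the $2\ell+2$ points $a_n$. The set $\pm\sqrt\Lambda$ is symmetric, and by Theorem \ref{t1.1} the choice of the points $a_n$ is immaterial. I would therefore take them symmetric, e.g. $\pm b_1,\dots,\pm b_{\ell+1}$, or let them coalesce so that the condition becomes a zero of order $2\ell+2$ at $0$.

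I then split $F$ into its even and odd parts. Both parts lie in $PW_{2a}$ and both vanish on the symmetric zero set, so if (2) fails there is a nonzero even $F\in PW_{2a}$ with these zeros. An even $F$ is the transform of an even $f$, so $F(z)=2\int_0^{2a} f(t)\cos(zt)\,dt$. The zero of order $2\ell+2$ at $0$ means $\int_0^{2a} t^{2k}f(t)\,dt=0$ for $k=0,\dots,\ell$; the odd moments vanish automatically by evenness. After the change of variables $t=2s$, these conditions together with the zeros on $\sqrt\Lambda$ say exactly that $g(s)=f(2s)$ annihilates the system in (3) in $L^2(0,a)$. The argument reverses: a nonzero annihilator $g$ for (3) extends evenly to a nonzero annihilator for the coalesced system.

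The main obstacle is justifying the passage from distinct points $a_n$ to a multiple zero at $0$. I would handle it by a division argument in the Paley--Wiener space. If $F$ vanishes at $2\ell+2$ distinct points, then $F(z)\,z^{2\ell+2}/\prod_n(z-a_n)$ still lies in $PW_{2a}$ and vanishes to order $2\ell+2$ at $0$. The multiplier is bounded away from its poles, so membership is preserved, and the map is reversible. This shows that completeness is unchanged by replacing the points $a_n$ with a zero of multiplicity $2\ell+2$ at $0$. If $0\in\pm\sqrt\Lambda$ there would be a conflict, but $\lambda_n>0$ excludes this.
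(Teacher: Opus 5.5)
Your proposal is correct and follows essentially the paper's route: (1)$\Leftrightarrow$(2) by turning the common-spectrum hypothesis into $m=\tilde m$ on $\Lambda$ and invoking Theorem \ref{t1.1}, and (2)$\Leftrightarrow$(3) by dividing out the auxiliary points to get a zero of order $2\ell+2$ at the origin, passing to an even function in $PW_{2a}$, and reading off the cosine transform and even moments after $t=2s$. The paper differentiates $g(\sqrt{z})$ instead of $F$, but that is the same computation. One detail is missing: when $F$ is odd its even part vanishes, so you must pass to $F(z)/z$, as the paper does. This function is even, lies in $PW_{2a}$, and keeps a zero of order at least $2\ell+2$ at $0$, because an odd function has a zero of odd order there.
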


When $\ell=0$, Theorems \ref{t1.1} and \ref{c1.1} are two versions of the well-known Horv\'{a}th's theorem. Theorem \ref{t1.1} is due to Makarov and Poltoratski \cite[Theorem 6]{MP2019}; the special case $a=1$ appears early in the work of Baranov, Belov and Poltoratski \cite[Theorem 2]{BBP2017}. Theorem \ref{c1.1} was proved by Horv\'{a}th \cite[Theorem 1.1]{Horv2005}. For any non-negative integer $\ell$, Theorem \ref{c1.1} shows that the sufficient condition given by Xu, Yang and Bondarenko in \cite[Theorem 1.2]{XYB2023} is also necessary. In other words, the assumptions in \cite[Theorem 5.2]{XYB2023} are removable.

For $s\in\mathbb{R}$, we denote by $\lfloor s\rfloor$ the nearest integer to $s$, with the tie-breaker favoring the smaller integer.
For general $\ell$, we provides a sufficient condition for the uniqueness of the solution of the mixed spectral problem as follows. 

\begin{theorem}\label{t1.2}
Let $\ell>-1/2$, $a \in(0,1]$.
Suppose $\Lambda$ is a sequence of distinct positive real numbers satisfying that
$\{e^{i\lambda t}\mid\lambda\in\pm\sqrt{\Lambda}
\cup\{a_n\}_{n=1}^{\lfloor2\ell\rfloor+1}$
is complete in $L^{2}(-2a,2a)$, 
where 
$\pm\sqrt{\Lambda}=\{z: z^{2} \in \Lambda\}$,
$a_n\neq a_k$ for $n\neq k$, and $\pm\sqrt{\Lambda}\cap\{a_n\}_{n=1}^{\lfloor 2\ell\rfloor+1}=\emptyset$.
If $q$, $\tilde{q} \in L^{p}(0,1)$ with $p\in [1,\infty]$ satisfy  $q=\tilde{q}$ on $(a,1)$ and $m=\tilde{m}$ on $\Lambda$, then $q=\tilde{q}$ a.e. on $(0,1)$.
\end{theorem}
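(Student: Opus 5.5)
Suppose $q,\tilde q\in L^{p}(0,1)$ agree on $(a,1)$ and $m=\tilde m$ on $\Lambda$. The plan is to build an entire function of exponential type at most $2a$ that vanishes on $\pm\sqrt{\Lambda}$ and grows at most like $|z|^{\lfloor 2\ell\rfloor}$ on $\mathbb{R}$. Multiplying it by a suitable polynomial then yields a function that annihilates the complete system in the hypothesis, and hence must vanish.

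\textbf{Step 1: reduction to a Wronskian identity.} Set $\phi=\phi_\ell(\lambda,x,q)$ and $\tilde\phi=\phi_\ell(\lambda,x,\tilde q)$. Since $q=\tilde q$ on $(a,1)$, the solution spaces of the two equations coincide there. Hence the Wronskian $W(\lambda)=\phi(\lambda,x)\tilde\phi'(\lambda,x)-\phi'(\lambda,x)\tilde\phi(\lambda,x)$ is independent of $x\in[a,1]$. Evaluating at $x=1$ shows that $m(\lambda)=\tilde m(\lambda)$ is equivalent to $W(\lambda)=0$, provided $\phi'(\lambda,1)\tilde\phi'(\lambda,1)\neq0$; if one of these vanishes, one checks directly that $W(\lambda)=0$ still holds. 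So $W$ vanishes on $\Lambda$. The function $F(z)=W(z^2)$ is even and entire in $z$, and it vanishes on $\pm\sqrt{\Lambda}$.

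\textbf{Step 2: growth of $F$.} Evaluate $W$ at $x=a$. Using \eqref{e2.1}--\eqref{e2.2} at $x=a$ gives
\[
W(z^2)=z^{-2\ell-1}\bigl(\sin(za-\tfrac{\ell\pi}{2})\cos(za-\tfrac{\ell\pi}{2})-\cos(za-\tfrac{\ell\pi}{2})\sin(za-\tfrac{\ell\pi}{2})+O(|z|^{-1}e^{2a|\mathrm{Im} z|})\bigr).
\]
The leading terms cancel, so $|F(z)|\le C|z|^{-2\ell-2}e^{2a|\mathrm{Im}\, z|}$ for large $|z|$. Then $G(z)=z^{2\ell+1}F(z)$, with an appropriate choice of the branch of $\lambda^{-(\ell+1)/2}$, is entire in the case of non-integer $\ell$ as well. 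This holds because $\phi$ behaves like $\lambda^{-(\ell+1)/2}$ times a function of $\sqrt\lambda x$, so the normalized solution $\lambda^{(\ell+1)/2}\phi$ is entire and even in $z$. The function $G$ is of exponential type at most $2a$ and is $O(|z|^{-1})$ on $\mathbb{R}$, so $G\in PW_{2a}$, i.e.\ $G$ is the Fourier transform of some function in $L^2(-2a,2a)$.

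The hypothesis $q\in L^p$ with $p=1$ only gives the remainder $o(1)$ rather than $O(|z|^{-1})$. To handle this, I would use $O(|z|^{-1})$ for $p\ge 1$, which is valid by \eqref{e2.1} for any $q\in L^1$ as stated. The index $\lfloor 2\ell\rfloor+1$ appears because we must divide $G$ by a polynomial with $\lfloor 2\ell\rfloor+1$ zeros. Equivalently, $H(z)=F(z)\,z^{2\ell+2}/\prod_{n}(z-a_n)$ is controlled. More directly: $F(z)\prod_{n=1}^{\lfloor 2\ell\rfloor+1}(z-a_n)$ is entire, of type $2a$, and is $O(|z|^{\lfloor2\ell\rfloor+1-2\ell-2})$, which is in $L^2(\mathbb{R})$ since $\lfloor2\ell\rfloor-2\ell\le 1/2$. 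Hence this function lies in $PW_{2a}$ and vanishes on $\pm\sqrt\Lambda\cup\{a_n\}$.

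\textbf{Step 3: conclusion.} The system $\{e^{i\lambda t}\}$ over $\pm\sqrt\Lambda\cup\{a_n\}$ is complete in $L^2(-2a,2a)$. A function in $PW_{2a}$ vanishing on that set is orthogonal to the whole system, so $F\prod(z-a_n)\equiv0$, and thus $W\equiv0$. Then $m=\tilde m$ everywhere: this holds for $a=1$ directly, and for $a<1$ the vanishing of $W$ at $x=a$ gives equality of the Weyl functions on $(0,a)$ with $\phi'(\lambda,a)\ne0$. By uniqueness of the Weyl function (KST2010, as cited after \eqref{e1.3}), applied on $[0,a]$ after rescaling, we get $q=\tilde q$ a.e.

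\textbf{Main obstacle.} The delicate point is Step 2 for non-integer $\ell$ and $p=1$. There one must justify that the remainder in \eqref{e2.1}--\eqref{e2.2} is genuinely $O(|z|^{-1}e^{|\mathrm{Im}\, z|x})$ uniformly, and that $z^{2\ell+2}F$ is entire with the exact power. I would first try to verify this via the Bessel-function representation $\phi_\ell(\lambda,x,0)\propto \lambda^{-(\ell+1)/2}\sqrt{x}\,J_{\ell+1/2}(\sqrt\lambda x)$ together with the standard Volterra perturbation estimate.
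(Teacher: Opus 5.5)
Your argument is essentially the paper's proof. Lemma \ref{l3.4} uses \eqref{e3.1} to move the Wronskian to $x=a$ and applies \eqref{e2.1}--\eqref{e2.2} to get $z^{\lfloor 2\ell\rfloor+1}W(z^2)\in PW_{2a}$; the proof of the theorem then multiplies $W(z^2)$ by $\prod_{k}(z-a_k)$ and concludes via Lemma \ref{l3.1} and uniqueness of the Weyl function, exactly as in your ``more direct'' version.

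Two small corrections. First, drop the claim that $z^{2\ell+1}F(z)$ is entire in general: $\phi_\ell(\lambda,x,q)$ is already entire in $\lambda$, so this product has a branch point at $0$ whenever $2\ell\notin\mathbb{Z}$. Second, square integrability of $|x|^{\lfloor 2\ell\rfloor-2\ell-1}$ at infinity needs the strict inequality $\lfloor 2\ell\rfloor-2\ell<1/2$, not $\le 1/2$; the strict inequality is exactly what the tie-breaker toward the smaller integer guarantees.
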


As corollaries to Theorem \ref{t1.2}, we obtain generalizations of the classical results of Borg \cite{Borg1946} and Hochstadt-Lieberman \cite{HB1978} to the perturbed Bessel equation \eqref{e1.1}. Specifically, we establish the uniqueness of recovering the potential $q(x)$ on $(0,1)$ from the two spectra $\sigma(\ell,q,0)$ and $\sigma(\ell,q,\infty)$. We also study the half-inverse problem.
According to Koyunbakan and Panakhov \cite{HE2005}, if $q(x)$ is known on the half-interval $(1/2,1)$, it can be recovered on $(0,1/2)$ from a single spectrum $\sigma(\ell, q, \beta)$.
Corollary \ref{exa2} strengthens this result by demonstrating that not all data in $\sigma(\ell, q, \beta)$ are necessary.

The paper is organized as follows. 
In Section \ref{s2} we discuss in detail the properties of the spectra of the radial Schr\"{o}dinger operators, and some properties of the Paley-Wiener spaces that will be used subsequently.
In Section \ref{s8} we describe the connections between the Paley-Wiener spaces and the potentials.
In Section \ref{s4} the proofs of Theorems \ref{t1.1}-\ref{c1.1} are given. 
In Section \ref{s5} we condsider the non-integer case of $\ell$ and present some applications of Theorem \ref{t1.2}, namely, the Borg-type and Hochstadt-Lieberman-type results.
In \ref{s6}, the product formulas of the solutions are given.

\section{Preliminaries}\label{s2}

\subsection{The radial Schr\"{o}dinger operators}

The aim of this subsection is to discuss the properties of the spectra of the radial Schr\"{o}dinger operators.

The following lemma shows that $\{\lambda_{\ell, \beta, n}(q)\}$ and $\{\lambda_{\ell, \infty, n}(q)\}$ are interlacing.

\begin{lemma}\label{p2.3}
Suppose $q\in L^{1}(0,1)$, $\ell\in\mathbb{N}_{0}$ and $\beta\in\mathbb{R}$. Then 
\begin{align*}
\lambda_{\ell, \beta, n}(q)<\lambda_{\ell, \infty, n}(q)<\lambda_{\ell, \beta, n+1}(q),
~~n \geq 1.
\end{align*}
\end{lemma}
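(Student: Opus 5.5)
The interlacing will follow from the monotonicity of the Weyl function $m(\lambda)=\phi_\ell(\lambda,1,q)/\phi_\ell'(\lambda,1,q)$ on the real axis, together with a count of eigenvalues below a large threshold. Write $\phi=\phi_\ell(\lambda,x,q)$, and let dots denote $\partial_\lambda$. The eigenvalues in $\sigma(\ell,q,\infty)$ are the zeros of $\phi(\lambda,1)$. For $\beta\in\mathbb{R}$, the eigenvalues in $\sigma(\ell,q,\beta)$ are the zeros of $\phi'(\lambda,1)+\beta\phi(\lambda,1)$, i.e. the real solutions of $m(\lambda)=-1/\beta$, with the convention $m=\infty$ when $\beta=0$.

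\textbf{Step 1 (Wronskian identity).} Differentiate equation \eqref{e1.1} in $\lambda$ and form the Wronskian of $\phi$ and $\dot\phi$. This gives
\begin{align*}
\frac{d}{dx}\bigl(\dot\phi\,\phi'-\phi\,\dot\phi'\bigr)=\phi^2 .
\end{align*}
Integrate over $(0,1)$ for real $\lambda$. The boundary term at $x=0$ vanishes because $\phi$ and $\dot\phi$ behave like $O(x^{\ell+1})$ and $\phi'$, $\dot\phi'$ like $O(x^{\ell})$. This decay should come from the normalization of $\phi$ being independent of $\lambda$, and it is the point requiring care: I would justify it through the Volterra integral equation for $\phi$ built from the Bessel-type fundamental system, which yields the estimates uniformly in $\lambda$ on compact sets. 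The result is
\begin{align*}
\dot\phi(\lambda,1)\phi'(\lambda,1)-\phi(\lambda,1)\dot\phi'(\lambda,1)=\int_0^1\phi^2\,dx>0 .
\end{align*}
Consequently $\dot m=\int_0^1\phi^2dx/\phi'(\lambda,1)^2>0$ wherever $\phi'(\lambda,1)\ne0$. Two facts follow. First, $\phi(\cdot,1)$ and $\phi'(\cdot,1)$ have no common real zero. Second, all zeros of $\phi(\cdot,1)$ and of $\phi'(\cdot,1)+\beta\phi(\cdot,1)$ are simple.

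\textbf{Step 2 (strict interlacing of zeros).} On each interval between consecutive poles of $m$ (the zeros of $\phi'(\cdot,1)$), $m$ increases strictly from $-\infty$ to $+\infty$. It therefore takes the value $0$ exactly once and the value $-1/\beta$ exactly once there. Hence, for $\beta\neq0$, the zeros of $\phi(\cdot,1)$ and of $\phi'(\cdot,1)+\beta\phi(\cdot,1)$ strictly alternate. The case $\beta=0$ is the same argument using $1/m$.

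\textbf{Step 3 (fixing the indexing).} It remains to show that the smallest element of the whole configuration belongs to $\sigma(\ell,q,\beta)$, so that $\lambda_{\ell,\beta,1}<\lambda_{\ell,\infty,1}$. I would use the variational characterization. The Dirichlet form is
\begin{align*}
\int_0^1\Bigl(|u'|^2+\tfrac{\ell(\ell+1)}{x^2}|u|^2+q|u|^2\Bigr)dx
\end{align*}
on functions with $u(1)=0$, while the Robin form adds $\beta|u(1)|^2$ on the larger domain without the constraint at $1$. The Dirichlet problem is a rank-one constrained restriction of the Robin problem, so min-max gives $\lambda_{\ell,\beta,n}\le\lambda_{\ell,\infty,n}\le\lambda_{\ell,\beta,n+1}$. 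Strictness then follows because the two spectra are disjoint (no common zeros, by Step 1).

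Alternatively, Step 3 can be done by an asymptotic count. Comparing \eqref{ew1} and \eqref{ew2} for large $n$ shows that both sequences are indexed consistently with the alternation of Step 2, with offset zero. Together with Step 2, this yields the stated chain of inequalities for all $n\ge1$.

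The main obstacle I expect is the boundary analysis at the singular endpoint $0$, both in Step 1 and in justifying the form domain for min-max when $\ell(\ell+1)/x^2$ is present. Because $\ell\in\mathbb{N}_0$, the Hardy inequality keeps the form semibounded and closed, so the singular endpoint should not obstruct the argument.
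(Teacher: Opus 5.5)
Your proof is correct, and its core is the paper's. The paper also obtains the interlacing from the Herglotz property of the Weyl function, but it does so by citation: no common zeros from a lemma in Teschl's book, the Herglotz property of $m_\beta=-1/m-\beta$, and Levin's theorem that zeros and poles of a real Herglotz meromorphic function interlace. It then fixes the indexing exactly as in your alternative Step 3, by comparing \eqref{ew1} and \eqref{ew2}.

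Your Steps 1--2 replace those citations with a direct proof via $\frac{d}{dx}(\dot\phi\phi'-\phi\dot\phi')=\phi^2$. This makes the argument self-contained. The only extra work is the boundary term at $x=0$, which is $O(x^{2\ell+1})$ once you have bounds on $\phi,\phi'$ uniform for $\lambda$ in a complex neighbourhood and apply Cauchy's estimates in $\lambda$.

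Your primary Step 3, min-max for the codimension-one restriction $\{u(1)=0\}$ of the Robin form, is a genuinely different route. It needs no eigenvalue asymptotics. Combined with the disjointness of the two spectra from Step 1, it already gives the strict chain of inequalities, so Step 2 becomes unnecessary. The price is setting up the closed form at the singular endpoint: for $\ell\in\mathbb{N}_0$ the form domain is $\{u\in H^1(0,1):u(0)=0\}$, with the term $\ell(\ell+1)x^{-2}|u|^2$ controlled by Hardy's inequality. You must also identify the associated operators with those defined by \eqref{e1.2}.

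If you use the asymptotic count instead, Step 2 needs one more remark, since as written it only treats bounded intervals between consecutive poles. Let $p_1$ be the smallest zero of $\phi'(\cdot,1)$. On $(-\infty,p_1)$ the function $m$ increases to $+\infty$, so its range there is an interval $(L,+\infty)$. Hence if it attains the smaller of the two values $0$ and $-1/\beta$, it also attains the larger. So if only one of the two zeros lies below $p_1$, it is of the type that comes second in every bounded interval, and the alternation persists over all real zeros.
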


\begin{proof}
It follows from \cite[Lemma 9.2]{Tes2009} that there
exist two linearly independent nonzero solutions $y_{1}(\lambda,x, q)$ and $y_{2}(\lambda,x, q)$ of the equation \eqref{e1.1} such that 
\begin{align*}
\phi_{\ell}(\lambda,x, q)
=\phi_{\ell}(\lambda, 1, q)y_{1}(\lambda,x, q)
+\phi_{\ell}^{\prime}(\lambda, 1, q)y_{2}(\lambda,x, q),
\end{align*}
which implies $\phi_{\ell}(\lambda, 1, q)$ and $\phi_{\ell}^{\prime}(\lambda, 1, q)+\beta\phi_{\ell}(\lambda, 1, q)$ have no common zeros. 

Write
\begin{align*}
m_{\beta}(\lambda)=-\frac{1}{m(\lambda)}-\beta
=-\frac{\phi_{\ell}^{\prime}(\lambda,1,q)+\beta\phi_{\ell}(\lambda,1,q)}
{\phi_{\ell}(\lambda, 1, q)}.
 \end{align*}
Recall that $m$ is a Herglotz function, and so is $m_{\beta}$. Also, combining \eqref{e2.5} and \eqref{e2.6}, we get $m_{\beta}(\lambda)=\overline{{m}_{\beta}(\bar{\lambda})}$ and $\{\lambda_{\ell, \beta, n}(q)\}$ and $\{\lambda_{\ell, \infty, n}(q)\}$ are the zeros and poles of $m_{\beta}$, respectively. Thus from \cite[Theorem 1, pp.308]{Lev1964}, it follows that $\{\lambda_{\ell, \beta, n}(q)\}$ and $\{\lambda_{\ell, \infty, n}(q)\}$ are interlacing. Moreover, by \eqref{ew1} and \eqref{ew2}, we get 
\[
\lambda_{\ell, \beta, n}(q)<\lambda_{\ell, \infty, n}(q)<\lambda_{\ell, \beta, n+1}(q),
~~n \geq 1.
\]
This completes the proof.
\end{proof}

In particular, for $\ell\in\mathbb{N}_{0}$ and $q\in L^{2}(0,1)$,  \cite[Proposition 3.1]{Ser2007} gives   
\begin{align}\label{e2.9}
\lambda_{\ell,\infty,n}(q)
&=\left(n+\frac{\ell}{2}\right)^{2}\pi^{2}+\int_{0}^{1}q(t)dt 
-\ell(\ell+1)+l^{2}(n),
\end{align}
 and \cite[Theorems 1.2]{Carl1993} combined with \cite[Theorems 1.1]{Carl1997} yields
\begin{align*}
\lambda_{\ell,\beta,n}(0) 
&=\left(n+\frac{\ell-1}{2}\right)^{2}\pi^{2}
-\ell(\ell+1)+s_{\ell,\beta}+l^{2}(n),~~
\beta\in\mathbb{R},\\
\lambda_{\ell,\beta,n}(q) 
&=\lambda_{\ell,\beta,n}(0)+\int_{0}^{1}q(t)dt+l^{2}(n),~~
\beta\in\mathbb{R},
\end{align*}
respectively, where $l^2(n)$ denotes the $n$-th term of an $l^2$-sequence and $s_{\ell,\beta}$ is a constant depending only on $\ell$ and $\beta$. Thus
\begin{align}\label{e2.11}
\lambda_{\ell,\beta,n}(q) 
&=\left(n+\frac{\ell-1}{2}\right)^2 \pi^2+\int_{0}^{1}q(t)dt-\ell(\ell+1)+s_{\ell,\beta}+l^{2}(n),~
\beta\in\mathbb{R}.
\end{align}

\begin{lemma}\label{l2.4}
For each $\ell\in\mathbb{N}_{0}$, the map 
$T_{\ell}:\mathbb{R}\rightarrow\mathbb{R}$, defined by $T_{\ell}(\beta)=s_{\ell,\beta}$,  is one to one and onto.
\end{lemma}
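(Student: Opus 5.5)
The plan is to compute $s_{\ell,\beta}$ explicitly for the free operator $q=0$ and show that it is an affine function of $\beta$ with nonzero slope. Concretely, I expect
\[
s_{\ell,\beta}=2\beta+c_\ell ,
\]
with $c_\ell$ depending only on $\ell$; such a map is a bijection of $\mathbb{R}$. Note that, by \eqref{e2.11} with $q=0$, $s_{\ell,\beta}$ is well defined as
\[
s_{\ell,\beta}=\lim_{n\to\infty}\Bigl(\lambda_{\ell,\beta,n}(0)-\bigl(n+\tfrac{\ell-1}{2}\bigr)^2\pi^2+\ell(\ell+1)\Bigr),
\]
since an $l^2$-sequence tends to $0$. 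So it suffices to find the constant term in the asymptotics of $\lambda_{\ell,\beta,n}(0)$ up to $o(1)$.

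\textbf{Step 1: explicit free solution.} For $q=0$, with $\lambda=k^2$, the normalized solution is a constant multiple of $k^{-\ell-1/2}\sqrt{x}\,J_{\ell+1/2}(kx)$. Since $\ell\in\mathbb{N}_0$, this is a spherical Bessel function, i.e.\ a finite combination of $\sin$, $\cos$ and powers of $1/(kx)$. Up to the nonvanishing factor $k^{-\ell-1}$ we therefore have, as $k\to\infty$,
\[
\phi_\ell(k^2,1,0)\propto \sin\Bigl(k-\tfrac{\ell\pi}{2}\Bigr)+\frac{\ell(\ell+1)}{2k}\cos\Bigl(k-\tfrac{\ell\pi}{2}\Bigr)+O(k^{-2}),
\]
together with the corresponding expansion of $\phi_\ell'(k^2,1,0)$, obtained by differentiating in $x$ at $x=1$. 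These are the refinements of \eqref{e2.1}--\eqref{e2.2} to second order. The coefficient $\ell(\ell+1)/2$ is the standard $(4\nu^2-1)/8$ with $\nu=\ell+\tfrac12$.

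\textbf{Step 2: perturbative solution of the eigenvalue equation.} For fixed $\beta\in\mathbb{R}$, the eigenvalue condition $\phi'_\ell(k^2,1,0)+\beta\phi_\ell(k^2,1,0)=0$ becomes, after dividing by $k$,
\[
\cos\Bigl(k-\tfrac{\ell\pi}{2}\Bigr)+\frac{\beta+A_\ell}{k}\sin\Bigl(k-\tfrac{\ell\pi}{2}\Bigr)+O(k^{-2})=0,
\]
where $A_\ell$ is an explicit constant coming from the Bessel correction. By \eqref{ew2}, the $n$-th root is $k_n=(n+\frac{\ell-1}{2})\pi+\delta_n$ with $\delta_n\to0$. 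Substituting this in gives
\[
\delta_n=\frac{\beta+A_\ell}{k_n}+O(n^{-2}).
\]
Hence
\[
\lambda_{\ell,\beta,n}(0)=k_n^2=\Bigl(n+\tfrac{\ell-1}{2}\Bigr)^2\pi^2+2(\beta+A_\ell)+o(1).
\]
Comparing with the limit formula above yields $s_{\ell,\beta}=2\beta+2A_\ell+\ell(\ell+1)$, which is affine in $\beta$ with slope $2$ and hence one-to-one and onto $\mathbb{R}$.

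\textbf{Main obstacle and fallback.} The delicate point is Step 2. It requires keeping the $1/k$ term precisely and controlling the $O(k^{-2})$ remainder uniformly near the roots, so that the implicit-function (or Rouch\'e) argument picks exactly the $n$-th root with the right indexing from \eqref{ew2}. The value of $A_\ell$ itself is irrelevant; only the slope in $\beta$ matters.

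If the explicit bookkeeping becomes messy, I would instead use the variational formula
\[
\frac{\partial\lambda_{\ell,\beta,n}}{\partial\beta}=\frac{\phi_\ell(\lambda_n,1,0)^2}{\|\phi_\ell(\lambda_n,\cdot,0)\|^2}.
\]
This is positive for every $n$, and by \eqref{e2.1} it tends to $2$ as $n\to\infty$ uniformly for $\beta$ in compact sets. That gives strict monotonicity and the growth $s_{\ell,\beta}\to\pm\infty$ as $\beta\to\pm\infty$, and with continuity this yields surjectivity.
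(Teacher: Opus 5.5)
Your proposal is correct and is essentially the paper's argument. Both obtain $s_{\ell,\beta_2}-s_{\ell,\beta_1}=2(\beta_2-\beta_1)$ from the $\beta$-dependence of the $1/n$ correction to $\sqrt{\lambda_{\ell,\beta,n}(0)}$; the paper imports this from Carlson's asymptotics, while you derive it from the explicit Riccati--Bessel form of $\phi_\ell(\cdot,\cdot,0)$, which also gives $A_\ell=-\ell(\ell+1)/2$, hence $s_{\ell,\beta}=2\beta$ exactly, i.e.\ $\beta_{(\ell)}=0$ for every $\ell\in\mathbb{N}_0$.
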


\begin{proof}
It suffices to show that 
$s_{\ell,\beta_{2}}-s_{\ell,\beta_{1}}=2(\beta_{2}-\beta_{1})$.
By \cite[(4.c)]{Carl1997} we obtain
\[
\lim_{n \rightarrow \infty}(\beta_{2}-\beta_{1})^{-1}n
\left(\sqrt{\lambda_{\ell, \beta_{2}, n+1}(0)}
-\sqrt{\lambda_{\ell, \beta_{1}, n+1}(0)}\right)=1,
\]
which together with \eqref{e2.11} implies 
\[
s_{\ell, \beta_{2}}-s_{\ell, \beta_{1}}
=\lim_{n \rightarrow \infty}
(\lambda_{\ell, \beta_{2}, n+1}(0)-\lambda_{\ell, \beta_{1}, n+1}(0))
=2(\beta_{2}-\beta_{1}),
\]
as desired.
\end{proof}

By Lemma \ref{l2.4}, for $\ell\in\mathbb{N}_{0}$, there exists a unique constant $\beta_{(\ell)}\in\mathbb{R}$ satisfying $s_{\ell,\beta}=T_{\ell}(\beta_{(\ell)})=0$. In particular, $T_{0}(0)=0$. Thus we obtain the following corollary.
\begin{corollary}\label{c2.5}
Suppose $\ell\in\mathbb{N}_{0}$ and $\{a_{n}\}_{n=1}^{\infty}$, $\{b_{n}\}_{n=1}^{\infty}$ are two sequences satisfying $a_{n}<b_{n}<a_{n+1}$ for each $n\in\mathbb{N}$. Then the followings are equivalent.
\begin{enumerate}
  
  \item [\textnormal{(1)}] There is an $L^2$-potential $q$ such that
   \begin{align*}
   \sigma(\ell, q, \beta_{(\ell)})=\{a_{n}\}_{n=1}^{\infty},
   \qquad
   \sigma(\ell, q, \infty)=\{b_{n}\}_{n=1}^{\infty}.
   \end{align*}
  
  \item [\textnormal{(2)}] The sequences satisfy the asymptotics
   \begin{align}
   a_{n}=&\left(n+\frac{\ell-1}{2}\right)^{2}\pi^{2}+C+l^{2}(n), \label{e2.13}\\
   b_{n}=&\left(n+\frac{\ell}{2}\right)^{2}\pi^{2}+C+l^{2}(n),\label{e2.12}
   \end{align}
   where $C \in \mathbb{R}$ is a constant and $l^2(n)$ denotes the $n$-th term of an $l^2$-sequence.
  
\end{enumerate}
\end{corollary}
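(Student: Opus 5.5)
This corollary is a two-spectra characterization for perturbed Bessel operators with $\ell\in\mathbb{N}_0$. The plan is to combine the asymptotics \eqref{e2.9}, \eqref{e2.11} with the choice $\beta=\beta_{(\ell)}$, which makes $s_{\ell,\beta}=0$ by Lemma \ref{l2.4}. For the converse we invoke the known existence theory for the two-spectra inverse problem of perturbed Bessel operators with $L^2$ potentials; this is the Carlson / Guliyev / Serier type characterization, e.g. \cite{Carl1997, Guli2005, Ser2007}.

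\emph{(1)$\Rightarrow$(2).} Suppose $q\in L^2(0,1)$ realizes both spectra. By \eqref{e2.9},
\[
b_n=\lambda_{\ell,\infty,n}(q)=\left(n+\tfrac{\ell}{2}\right)^2\pi^2+\int_0^1q\,dt-\ell(\ell+1)+l^2(n).
\]
By \eqref{e2.11} with $\beta=\beta_{(\ell)}$ and $s_{\ell,\beta_{(\ell)}}=0$,
\[
a_n=\lambda_{\ell,\beta_{(\ell)},n}(q)=\left(n+\tfrac{\ell-1}{2}\right)^2\pi^2+\int_0^1q\,dt-\ell(\ell+1)+l^2(n).
\]
Setting $C=\int_0^1 q\,dt-\ell(\ell+1)$ gives \eqref{e2.13}--\eqref{e2.12} with the same constant. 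The interlacing $a_n<b_n<a_{n+1}$ is exactly Lemma \ref{p2.3}, so it is consistent with the hypothesis. This direction is immediate.

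\emph{(2)$\Rightarrow$(1).} This is the substantive direction.
\begin{itemize}
\item First, I would build the candidate Weyl function $m_{\beta_{(\ell)}}(\lambda)=-\prod(1-\lambda/a_n)/\prod(1-\lambda/b_n)$, with suitable normalization and shifting if some $a_n\le 0$.
\item Strict interlacing ensures it is Herglotz.
\item Next, I would compare it with the unperturbed Bessel case $q=0$ shifted by the constant $C+\ell(\ell+1)$. For that case the spectra have exactly the same leading asymptotics, with $s_{\ell,\beta_{(\ell)}}=0$.
\item Since the differences are $l^2$, the quotient of the two infinite products is controlled. This should yield norming constants $\alpha_n$ with the asymptotics required by the Gelfand--Levitan/Carlson characterization of spectral data for $L^2$ potentials in the Bessel setting.
\item Applying the existence theorem then gives a $q\in L^2$ with these spectral data. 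Its Dirichlet spectrum is $\{b_n\}$, and the $\beta_{(\ell)}$-spectrum is recovered as the zero set of $\phi'_\ell(\lambda,1,q)+\beta_{(\ell)}\phi_\ell(\lambda,1,q)$, which equals $\{a_n\}$ by construction.
\end{itemize}

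An alternative for this direction is to cite directly the two-spectra characterization in \cite{Carl1997} or \cite{Guli2005}. Those results are stated for some fixed boundary parameter with a constant $s_{\ell,\beta}$. Lemma \ref{l2.4}, i.e. the bijectivity of $\beta\mapsto s_{\ell,\beta}$, lets us transfer them to $\beta=\beta_{(\ell)}$, where the asymptotic constants of $\{a_n\}$ and $\{b_n\}$ coincide.

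\emph{Main obstacle.} The hard step is verifying that the norming constants produced from $\{a_n\},\{b_n\}$ have the $l^2$-type asymptotics needed for the Bessel existence theorem. The key point is matching the constants: it is precisely because $s_{\ell,\beta_{(\ell)}}=0$ that the same $C$ appears in both sequences, which gives the correct leading behavior of the product ratio. I would handle this by a termwise comparison of the infinite products against their $q=0$ counterparts, using $\sum|l^2(n)|/n<\infty$-type estimates.
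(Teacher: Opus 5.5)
Your (1)$\Rightarrow$(2) argument and your alternative route for (2)$\Rightarrow$(1) are essentially the paper's proof: the paper cites a two-spectra existence theorem, \cite[Corollary~1.3]{AHM2008} (the reference it uses for the existence half, in place of the works you name), to get some $q\in L^{2}(0,1)$ and some $\beta\in\mathbb{R}$ realizing $\{a_n\}$ and $\{b_n\}$, then compares with \eqref{e2.9} and \eqref{e2.11} to get $s_{\ell,\beta}=0$, and uses the injectivity of $T_\ell$ (Lemma~\ref{l2.4}) to conclude $\beta=\beta_{(\ell)}$. Your primary norming-constant/Gelfand--Levitan route would amount to re-proving that existence theorem, and its key estimate (the norming-constant asymptotics) is left unverified, so it is the citation route that actually completes the argument.
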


\begin{proof}
(1)$\Rightarrow$(2) follows from \eqref{e2.9} and \eqref{e2.11}.

Conversely, it follows from \cite[Corollary~1.3]{AHM2008} that there exist $q\in L^{2}(0,1)$ and $\beta\in\mathbb{R}$ such that
\begin{align*}
\sigma(\ell, q, \beta)=\{a_{n}\}_{n=1}^{\infty},
\qquad
\sigma(\ell, q, \infty)=\{b_{n}\}_{n=1}^{\infty},
\end{align*}
and $\{a_{n}\}_{n=1}^{\infty}$, $\{b_{n}\}_{n=1}^{\infty}$ satisfy \eqref{e2.9} and \eqref{e2.11}, respectively. Thus $s_{\ell,\beta}=0$ and hence $\beta=\beta_{(\ell)}$, as desired.
\end{proof}

\subsection{The Paley-Wiener spaces}\label{s3}

Recall (cf. \cite[pp.84]{Lev1964}) that an entire function $f$ is of exponential type at most $a(a>0)$ if it satisfies the growth estimate $|f(z)|\leq Ce^{a|z|}$ for all $z\in \mathbb{C}$ and some constant $C>0$. And we say that an entire function $f$ belongs to the Paley–Wiener space $PW_{a}$ with $a>0$ if it is an entire function of exponential type at most $a$ and square integrable on $\mathbb{R}$.  
Equivalently, $f$ is the Fourier transform of an $L^2$-function supported on $(-a, a)$ (cf. \cite[pp.44-45]{Bran1968}).

A set $\Lambda\subset\mathbb{C}$ is called a uniqueness set for $PW_{a}$ $(a>0)$ if each function $f\in PW_{a}$ vanishing on $\Lambda$  must be identically zero.  The following fact follows immediately from the definition, and will be frequently used in the subsequent sections.

\begin{lemma}[\textnormal{\cite[pp. 186]{MP2005}}] \label{l3.1}
Suppose $a>0$.
The set $\Lambda$ is a uniqueness set of $PW_{a}$  if and only
if the exponential system
 $\{e^{i\lambda t}\mid\lambda\in\Lambda\}$ is complete in $L^{2}(-a,a)$.
\end{lemma}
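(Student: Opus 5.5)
This follows directly from the definitions. Recall that $PW_a$ is the image of $L^2(-a,a)$ under the Fourier transform: $f(z)=\int_{-a}^{a} g(t)e^{izt}\,dt$ with $g\in L^2(-a,a)$. By Plancherel's theorem, together with the Paley--Wiener characterization quoted above (cf. \cite[pp.44-45]{Bran1968}), the map $g\mapsto f$ is a linear bijection (up to a constant factor, an isometry) from $L^2(-a,a)$ onto $PW_a$. The plan is to express both "uniqueness set" and "completeness" as statements about the annihilator of the exponentials, and then compare.

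\textbf{Step 1.} By Hilbert space theory, the system $\{e^{i\lambda t}\mid \lambda\in\Lambda\}$ is complete in $L^2(-a,a)$ if and only if the only $h\in L^2(-a,a)$ satisfying
\[
\langle h, e^{i\lambda t}\rangle=\int_{-a}^{a} h(t)e^{-i\bar\lambda t}\,dt=0 \quad\text{for all }\lambda\in\Lambda
\]
is $h=0$. Put $g(t)=h(-t)$, which again lies in $L^2(-a,a)$. The orthogonality condition then becomes $f(\bar\lambda)=0$ for all $\lambda\in\Lambda$, where $f$ is the Fourier transform of $g$.

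\textbf{Step 2.} Deal with the conjugation. Write $f^*(z)=\overline{f(\bar z)}$. Then $f^*$ is the Fourier transform of $\overline{g(-t)}\in L^2(-a,a)$, so $f\mapsto f^*$ is a bijection of $PW_a$ onto itself, and $f^*\equiv0$ exactly when $f\equiv 0$. The condition $f(\bar\lambda)=0$ for all $\lambda\in\Lambda$ is equivalent to $f^*(\lambda)=0$ for all $\lambda\in\Lambda$. To avoid the conjugation altogether, one can instead pair $h$ bilinearly: $\{e^{i\lambda t}\}$ is complete if and only if its conjugate system is complete, and the conjugate system is orthogonal to $\bar h$ exactly when $\int h(t)e^{i\lambda t}\,dt=0$.

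\textbf{Step 3.} Combining the two steps: a nonzero $h$ orthogonal to every exponential exists if and only if some nonzero $F\in PW_a$ vanishes on $\Lambda$. Both directions hold because every map used ($h\mapsto g$, Fourier transform, $f\mapsto f^*$) is a bijection that preserves being nonzero. Taking contrapositives, completeness of the exponential system is equivalent to $\Lambda$ being a uniqueness set for $PW_a$.

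The only step that needs care is surjectivity of the Fourier transform onto $PW_a$, since it is what gives the direction "a nonzero $F\in PW_a$ vanishing on $\Lambda$ yields a nonzero annihilating $h$." This is exactly the Paley--Wiener theorem cited in the definition of $PW_a$, so there is no real obstacle.
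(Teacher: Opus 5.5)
Your argument is correct and is exactly the duality-via-Fourier-transform reasoning the paper has in mind when it says the lemma follows immediately from the definition of $PW_a$; the paper gives no proof beyond citing Makarov--Poltoratski. You also correctly identify that Paley--Wiener surjectivity is what the direction ``complete $\Rightarrow$ uniqueness'' needs, and the one slip is harmless: in the optional remark of Step 2, the conjugate system $\{e^{-i\bar\lambda t}\}$ is orthogonal to $h$ (not $\bar h$) exactly when $\int h(t)e^{i\lambda t}\,dt=0$, which changes nothing since $h\mapsto\bar h$ preserves being nonzero.
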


The following two lemmas present some basic properties of the Paley-Wiener spaces.

\begin{lemma}\label{l3.2}
Suppose $\Lambda$ is a real sequence and the exponential system
$\{e^{i\lambda t}\mid\lambda\in\Lambda\}$ is not complete in $L^{2}(-a, a)$. Then there exists a nonzero function $f\in PW_a$ satisfying $f(\Lambda)=\{0\}$ and $f(\mathbb{R})\subseteq\mathbb{R}$.
\end{lemma}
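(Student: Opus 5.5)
By Lemma \ref{l3.1}, the hypothesis that $\{e^{i\lambda t}\mid\lambda\in\Lambda\}$ is not complete in $L^{2}(-a,a)$ means that $\Lambda$ is not a uniqueness set for $PW_a$. So there is a nonzero $g\in PW_a$ with $g(\Lambda)=\{0\}$. The plan is to pass from $g$ to a real function by a reflection argument. Define
\[
g^{*}(z)=\overline{g(\bar z)}.
\]
Then $g^{*}$ is entire and of exponential type at most $a$, since $|g^{*}(z)|=|g(\bar z)|\le Ce^{a|z|}$. Also $|g^{*}(x)|=|g(x)|$ for real $x$, so $g^{*}\in L^2(\mathbb{R})$ and hence $g^{*}\in PW_a$. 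Because $\Lambda\subset\mathbb{R}$, for $\lambda\in\Lambda$ we get $g^{*}(\lambda)=\overline{g(\lambda)}=0$.

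Next set
\[
f_1=\frac{g+g^{*}}{2},\qquad f_2=\frac{g-g^{*}}{2i}.
\]
Both lie in $PW_a$ (it is a linear space), both vanish on $\Lambda$, and both are real on $\mathbb{R}$: for real $x$ we have $f_1(x)=\operatorname{Re}g(x)$ and $f_2(x)=\operatorname{Im}g(x)$. Moreover $g=f_1+if_2$, so since $g\not\equiv0$, at least one of $f_1,f_2$ is not identically zero. Take $f$ to be that one. It is a nonzero element of $PW_a$ with $f(\Lambda)=\{0\}$ and $f(\mathbb{R})\subseteq\mathbb{R}$.

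There is no real obstacle here. The only point to check is that the reflection $g\mapsto g^{*}$ preserves $PW_a$, which follows either from the growth estimate above or, equivalently, from noting that it corresponds to $h(t)\mapsto\overline{h(-t)}$ (up to the sign convention of the Fourier transform) on $L^2(-a,a)$, which preserves the support. The reality of $\Lambda$ is essential: it is what ensures that $g^{*}$ still vanishes on $\Lambda$.
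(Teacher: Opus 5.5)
Your proof is correct and follows essentially the same route as the paper: reflect $g$ to $\overline{g(\bar z)}$, which stays in $PW_a$ and still vanishes on the real set $\Lambda$, then keep whichever of the real-part or imaginary-part combinations is nonzero. The paper uses $g+h$, or $ig$ when $g+h\equiv 0$, which is the same dichotomy up to constants; the only other difference is that you obtain $g$ from Lemma \ref{l3.1}, while the paper writes it directly as the Fourier transform of a nonzero annihilating $G\in L^2(-a,a)$.
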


\begin{proof}
Since the exponential system
$\{e^{i\lambda t}\mid\lambda\in\Lambda\}$ is not complete in $L^{2}(-a, a)$, there exists a nonzero function $G\in L^{2}(-a,a)$ such that
\begin{align*}
\int_{-a}^{a}G(x)e^{i\lambda x}dx=0,~~\textnormal{for all}~
\lambda\in\Lambda.
\end{align*}
Write
\begin{align*}
g(z)=\int_{-a}^{a}G(x)e^{izx}dx,~~h(z)=\overline{g(\overline{z})},
~~z\in\mathbb{C},
\end{align*}
then $g,h\in PW_{a}$ and $g(\Lambda)=h(\Lambda)=\{0\}$.

If $g+h\equiv0$, then $g(\mathbb{R})\subseteq i\mathbb{R}$ and write $f=ig$.
If $g+h\not\equiv0$, then write
$f=g+h$. It is clear that $f\in PW_{a}$,
$f(\Lambda)=\{0\}$ and $f(\mathbb{R})\subseteq\mathbb{R}$, which completes the proof.
\end{proof}

\begin{lemma}\label{l3.3}
Suppose $\Lambda$ is a real sequence and the exponential system
$\{e^{i\lambda t}\mid\lambda\in\Lambda\}$ is complete in $L^{2}(-a, a)$. If $f\in PW_{a}$ satisfies $f(\Lambda)\subseteq\mathbb{R}$, then $f(\mathbb{R})\subseteq\mathbb{R}$.
\end{lemma}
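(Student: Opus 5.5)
We consider the reflected conjugate of $f$ and show that it coincides with $f$, using the uniqueness property of $\Lambda$.

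Set
\[
h(z)=\overline{f(\overline{z})},\qquad z\in\mathbb{C}.
\]
Then $h$ is entire and satisfies $|h(z)|=|f(\overline{z})|\le Ce^{a|z|}$, so it has exponential type at most $a$. On the real line $|h(x)|=|f(x)|$, hence $h\in L^{2}(\mathbb{R})$ and $h\in PW_{a}$. Equivalently, if $f(z)=\int_{-a}^{a}G(x)e^{izx}dx$, then $h(z)=\int_{-a}^{a}\overline{G(-x)}e^{izx}dx$, which is again the Fourier transform of an $L^2$-function supported on $(-a,a)$.

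Next consider $g=f-h\in PW_{a}$. For $\lambda\in\Lambda$ we have $\lambda\in\mathbb{R}$, so $h(\lambda)=\overline{f(\lambda)}$. Since $f(\lambda)\in\mathbb{R}$ by hypothesis, this gives $h(\lambda)=f(\lambda)$, and therefore $g(\Lambda)=\{0\}$.

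By Lemma \ref{l3.1}, the completeness of $\{e^{i\lambda t}\mid\lambda\in\Lambda\}$ in $L^{2}(-a,a)$ means that $\Lambda$ is a uniqueness set for $PW_{a}$. Hence $g\equiv 0$, that is, $f(z)=\overline{f(\overline{z})}$ for all $z\in\mathbb{C}$. Taking $z=x\in\mathbb{R}$ gives $f(x)=\overline{f(x)}$, so $f(\mathbb{R})\subseteq\mathbb{R}$.

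The only point needing care is checking that $h\in PW_{a}$, which was done above. The remaining steps are direct.
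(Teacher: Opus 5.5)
Your proposal is correct and follows essentially the same argument as the paper: it forms $\overline{f(\overline{z})}$, notes that this function lies in $PW_a$ and agrees with $f$ on $\Lambda$, and concludes from Lemma \ref{l3.1} that the two functions coincide, so $f$ is real on $\mathbb{R}$. Your explicit verification that $\overline{f(\overline{z})}\in PW_a$, including the Fourier representation with $\overline{G(-x)}$, fills in a detail the paper only states.
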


\begin{proof}
Write $g(z)=\overline{f(\overline{z})}$ for $z\in\mathbb{C}$ and it is clear that $g\in PW_{a}$ and $g(\Lambda)\subseteq\mathbb{R}$. Note that for every $\lambda\in\Lambda$, $f(\lambda)-g(\lambda)=0$ and then by Lemma \ref{l3.1}, $f-g\equiv 0$. This leads to $f(\mathbb{R})\subseteq\mathbb{R}$ and thus we complete the proof.
\end{proof}

\section{The connections between Paley-Wiener
spaces and potentials}\label{s8}

In this section we establish the connections between Paley-Wiener spaces and the related potentials.

\begin{proposition}\label{l3.5}
Suppose $a\in(0,1]$, $\ell\in\mathbb{N}_{0}$ and
$q$, $\tilde{q}\in L^{2}(0,1)$ satisfy $q=\tilde{q}$ on $(a,1)$.
Then 
\begin{align}\label{e3.9}
z^{2\ell+2}(\phi_{\ell}(z^2, 1, q)\phi_{\ell}^{\prime}(z^2, 1, \tilde{q})
-\phi_{\ell}(z^2, 1, \tilde{q})\phi_{\ell}^{\prime}(z^2, 1, q))
=f(z)+C
\end{align}
for some even function $f\in PW_{2a}$ and a constant $C$.
\end{proposition}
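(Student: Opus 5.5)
Write $\phi=\phi_\ell(\cdot,\cdot,q)$ and $\tilde\phi=\phi_\ell(\cdot,\cdot,\tilde q)$. The plan is to reduce the Wronskian at $x=1$ to a Wronskian at $x=a$, and then to express the latter through a product formula for solutions as a Fourier transform supported in $(-2a,2a)$, up to a constant.

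\emph{Step 1 (reduction to $x=a$).} Since $q=\tilde q$ on $(a,1)$, both $\phi(\lambda,\cdot)$ and $\tilde\phi(\lambda,\cdot)$ solve the same equation on $[a,1]$. Hence their Wronskian
\[
W(\lambda,x)=\phi(\lambda,x)\tilde\phi'(\lambda,x)-\tilde\phi(\lambda,x)\phi'(\lambda,x)
\]
is independent of $x\in[a,1]$. The left side of \eqref{e3.9} therefore equals $z^{2\ell+2}W(z^2,a)$. It is even in $z$ and entire, because $\phi(\lambda,x)$ and $\phi'(\lambda,x)$ are entire in $\lambda$. By the asymptotics \eqref{e2.1}--\eqref{e2.2} at $x=a$, the factor $\lambda^{-(\ell+1)/2}\lambda^{-\ell/2}$ is cancelled by $z^{2\ell+2}$ up to $z$. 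So
\[
z^{2\ell+2}W(z^2,a)=z\bigl(\sin(za-\tfrac{\ell\pi}{2})\cos(za-\tfrac{\ell\pi}{2})-\cos(\cdot)\sin(\cdot)+O(|z|^{-1}e^{2a|\mathrm{Im} z|})\bigr).
\]
The leading terms cancel, leaving $O(e^{2a|\mathrm{Im} z|})$. This shows the function is of exponential type at most $2a$ and bounded on $\mathbb R$, so it lies in the Cartwright class. Boundedness alone does not give $L^2$ on $\mathbb R$; we need to subtract a constant and gain decay.

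\emph{Step 2 (structure via transformation operators / product formula).} Use the representation of $\phi$ (the product formulas announced for the appendix) in the form
\[
\phi\tilde\phi'-\tilde\phi\phi' \text{ at } x=a = \int_0^a (q-\tilde q)(t)\,\phi(\lambda,t)\tilde\phi(\lambda,t)\,dt,
\]
which holds because $W(\lambda,0)=0$ by the behaviour $x^{\ell+1}$ at $0$. The product $\phi(\lambda,t)\tilde\phi(\lambda,t)$ has a representation
\[
z^{2\ell+2}\phi(z^2,t)\tilde\phi(z^2,t)=\sum_{k=0}^{\ell} c_k(t) z^{2k}\cdot(\text{polynomial part}) + \int_0^t K(t,s)\cos(2zs)\,ds.
\]
For $\ell=0$ this is simply $\tfrac12(1-\cos 2zt)+\int_0^t K\cos(2zs)\,ds$. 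For general $\ell$ one iterates the Bessel structure: $z^{\ell+1}j$-type functions are $\sin/\cos$ times polynomials in $1/z$, and multiplying by $z^{2\ell+2}$ produces polynomial terms in $z$. Integrating against $q-\tilde q\in L^2$ gives
\[
z^{2\ell+2}W = P(z)+\int_0^{a}\tilde K(s)\cos(2zs)\,ds
\]
with $\tilde K\in L^2(0,a)$ and $P$ an even polynomial. Step 1's bound $O(e^{2a|\mathrm{Im} z|})$, which is bounded on $\mathbb R$, forces $P$ to be a constant $C$. The cosine transform is an even function in $PW_{2a}$, which gives \eqref{e3.9}.

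\emph{Main obstacle.} The hard part is Step 2 for $\ell\ge1$: obtaining an $L^2$ kernel for the product of the two regular solutions near $t=0$, where the $\ell(\ell+1)/x^2$ singularity interacts with $q\in L^2$. If the appendix product formula is not directly usable, I would try an alternative. One can show $F(z)=z^{2\ell+2}W(z^2,a)-C$ is in $L^2(\mathbb R)$ by refining \eqref{e2.1}--\eqref{e2.2} to second order with $L^2$-remainders: the $O(|z|^{-1})$ terms give $\int_0^a(q-\tilde q)\cos(2zt)\,dt$-type expressions, which are in $L^2$, plus $O(|z|^{-2})$ terms times $z$. Together with exponential type $2a$, Paley--Wiener would then yield $F\in PW_{2a}$. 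Evenness is inherited from dependence on $z^2$.
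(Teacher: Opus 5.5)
\textbf{There is a genuine gap: the $L^2(\mathbb{R})$ property is never proved.}

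Your Step 1 is correct and matches the paper's first move. By \eqref{e3.1} and $q=\tilde q$ on $(a,1)$, the left side of \eqref{e3.9} is $z^{2\ell+2}W(z^2,a)$. This function is even, entire, of exponential type at most $2a$, and bounded on $\mathbb{R}$. But, as you say yourself, the whole content of the proposition is that $z^{2\ell+2}W(z^2,a)-C\in L^2(\mathbb{R})$, and your proposal does not establish it.

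\emph{Step 2.} It rests on a tool the paper does not provide. The ``product formulas'' of the appendix (Proposition \ref{p2.2}) are Hadamard factorizations of $\phi_\ell(\lambda,1,q)$ and $\phi_\ell'(\lambda,1,q)+\beta\phi_\ell(\lambda,1,q)$ over their zeros. They are not representations of $\phi_\ell(z^2,t,q)\phi_\ell(z^2,t,\tilde q)$ as cosine transforms with $L^2$ kernels. Obtaining such a representation for $\ell\ge1$ and $q\in L^2$ would need transformation operators for the perturbed Bessel equation, with $L^2$ control of their kernels near $t=0$. You do not supply these.

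\emph{The fallback.} It is inaccurate as stated: for $\ell\ge1$, what remains after removing the constant and the trigonometric integrals is not $z\cdot O(|z|^{-2})$. Work at first order in $q$: take $\ell=1$, $\tilde q=0$, and replace $\phi_1(\cdot,q)$ by the free solution $z^{-2}(\sin(zt)/(zt)-\cos(zt))$. Then $z^4W(z^2,a)$ is, up to sign, $\int_0^a q(t)g(zt)\,dt$, where $g(w)=\tfrac12+\tfrac12\cos 2w+k(w)$ and $k(w)=\sin^2 w/w^2-\sin 2w/w=O(\min\{1,1/w\})$.

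Now take $q(t)=t^{-1/2+\epsilon}$ on $(0,a)$ with small $\epsilon>0$. Then $\int_0^a q(t)k(zt)\,dt\sim c_\epsilon z^{-1/2-\epsilon}$ as $z\to+\infty$, where $c_\epsilon=\int_0^\infty s^{-1/2+\epsilon}k(s)\,ds\to-2\sqrt{\pi}/3\neq0$. This term is square integrable but decays more slowly than $1/z$, so pointwise remainder bounds cannot close the argument. You would need a genuine $L^2$ estimate, for example the Hankel-type bound $\|\int_0^a q(t)k(zt)\,dt\|_{L^2(dz)}\le\|q\|_2\int_0^\infty|k(w)|w^{-1/2}\,dw$. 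You would also need the same control for the terms nonlinear in $q$ in the singular region $t\lesssim 1/|z|$.

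\textbf{What the paper does instead.} It avoids this analysis by transferring the problem to $\ell=0$.
\begin{enumerate}
\item It rescales to $a=1$.
\item For even $\ell$, it chooses points $a_1<b_1<\dots<a_{\ell/2}<b_{\ell/2}$ below both spectra.
\item Using the two-spectra existence result (Corollary \ref{c2.5}, based on \cite{AHM2008}) and Proposition \ref{p2.2}, it builds $\ell=0$ potentials $q_0,\tilde q_0\in L^2$ with $\phi_0(\lambda,1,q_0)=\phi_\ell(\lambda,1,q)\prod_k(b_k-\lambda)$ and $\phi_0'(\lambda,1,q_0)=(\phi_\ell'+\beta_{(\ell)}\phi_\ell)(\lambda,1,q)\prod_k(a_k-\lambda)$, and likewise for $\tilde q$.
\item Replacing $\phi'$ by $\phi'+\beta_{(\ell)}\phi$ in both terms leaves the Wronskian unchanged. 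Hence the $\ell$-Wronskian times the fixed polynomial $\prod_k(a_k-z^2)(b_k-z^2)$ equals the $\ell=0$ Wronskian.
\item For the $\ell=0$ Wronskian, $z^2W_0=g+C$ with $g\in PW_2$ even is known \cite{Koro2004,BBP2017}.
\item Dividing by the polynomial and multiplying by $z^{2\ell}$ keeps the function in $PW_2+\mathbb{C}$.
\end{enumerate}
This reduction is the idea your proposal is missing. Without it, you would have to carry out the singular $L^2$ analysis near $t=0$ in full.
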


\begin{proof}
We prove the proposition in the case that $\ell$ is even and the odd case is similar.

First we show that it suffices to prove the case $a=1$. In fact, for each $a\in(0,1]$ we write $u_{a}(t)=u(at)$. Then the boundary value problem on the interval $(0,a]$:
\begin{align*}
-u^{\prime \prime}(x)+\frac{\ell(\ell+1)}{x^{2}}u(x)+q(x)u(x)
=\lambda u(x),
\end{align*}
with the boundary conditions
\[
\lim_{x \rightarrow 0}\frac{u(x)}{x^{\ell+1}}<\infty, 
~~u^{\prime}(a)+\beta u(a)=0,
\] 
can be rewritten as the boundary value problem on the interval $(0,1]$:
\begin{align*}
-u_{a}^{\prime \prime}(t)+\frac{\ell(\ell+1)}{t^{2}}u_{a}(t)+a^{2}q(at)u_{a}(t)
=a^{2}\lambda u_{a}(t),
\end{align*}
with the boundary conditions
\begin{align*}
\lim_{t\rightarrow 0}\frac{u_{a}(t)}{t^{\ell+1}}<\infty, 
~~u_{a}^{\prime}(1)+\beta au_{a}(1)=0.   
\end{align*}
Write $q_{a}(t)=a^{2}q(at)$ for $t\in(0,1]$ and we have
\begin{align*}
\phi_{\ell}(z^2, at, q)=u(at)=u_{a}(t)=\phi_{\ell}(a^{2}z^2,t, q_{a}),
\end{align*}
which implies
\begin{align}\label{e3.0}
\phi_{\ell}(z^2, a, q)=\phi_{\ell}(a^{2}z^2,1, q_{a}),~~
\phi_{\ell}^{\prime}(z^2, a, q)=\frac{\phi_{\ell}^{\prime}(a^{2}z^2,1, q_{a})}{a}.
\end{align}

It follows from the proof of \cite[Lemma 3.2]{XYB2023} that
\begin{align}\label{e3.1}
&\int_{0}^{t} (q-\tilde{q})\phi_{\ell}(z^2, x, q) \phi_{\ell}(z^2, x, \tilde{q})dx \nonumber \\
&\qquad\qquad\qquad\qquad
=\phi_{\ell}(z^2,t, q)\phi_{\ell}^{\prime}(z^2,t, \tilde{q})
-\phi_{\ell}(z^2, t, \tilde{q})\phi_{\ell}^{\prime}(z^2, t, q)
\end{align}
for any $t\in[0,1]$. Thus \eqref{e3.0} and \eqref{e3.1} yield
\begin{align*}
f(z)+C
=&z^{2\ell+2}(\phi_{\ell}(z^2, 1, q)\phi_{\ell}^{\prime}(z^2, 1, \tilde{q})
-\phi_{\ell}(z^2, 1, \tilde{q})\phi_{\ell}^{\prime}(z^2, 1, q))\\
=&z^{2\ell+2}(\phi_{\ell}(z^2,a,q)\phi_{\ell}^{\prime}(z^2,a,\tilde{q})
-\phi_{\ell}(z^2, a, \tilde{q})\phi_{\ell}^{\prime}(z^2, a, q))\\
=&\frac{z^{2\ell+2}}{a}(\phi_{\ell}(a^{2}z^2,1,q_{a})
\phi_{\ell}^{\prime}(a^{2}z^2,1,\tilde{q}_{a})
-\phi_{\ell}(a^{2}z^2,1, \tilde{q}_{a})
\phi_{\ell}^{\prime}(a^{2}z^2,1, q_{a})).
\end{align*}
Let $w=az$ and write $h(w)=f(w/a)$. To prove \eqref{e3.9}, it suffices to show that 
\[
\frac{w^{2\ell+2}}{a^{2\ell+3}}(\phi_{\ell}(w^{2},1,q_{a})
\phi_{\ell}^{\prime}(w^2,1,\tilde{q}_{a})
-\phi_{\ell}(w^2,1, \tilde{q}_{a})
\phi_{\ell}^{\prime}(w^2,1, q_{a}))=h(w)+C
\]
for some $h\in PW_{2}$ and a constant $C$.

Now assume $a=1$. If $a_{1}$, $a_{2}$, $\cdots$, $a_{\ell/2},b_{1}$, $b_{2}$, $\cdots$, $b_{\ell/2}\in\mathbb{R}$ satisfy
\begin{align*}
a_{1}<b_{1}<a_{2}<b_{2}<\cdots<a_{\ell/2}<b_{\ell/2}
<\min\{\lambda_{\ell, \beta_{(\ell)}, 1}(q),\lambda_{\ell, \beta_{(\ell)}, 1}(\tilde{q})\},
\end{align*}
then by Corollary \ref{c2.5}, there exists $q_{0}\in L^{2}(0,1)$ such that
\begin{align*}
\sigma(0,q_{0},0)
&=\sigma(\ell, q, \beta_{(\ell)})\cup\{a_{1},a_{2},\ldots,a_{\ell/2}\},\\
\sigma(0, q_{0}, \infty)
&=\sigma(\ell, q, \infty)\cup\{b_{1},b_{2},\ldots,b_{\ell/2}\},
\end{align*}
which together with Proposition \ref{p2.2} implies
\begin{align*}
&(\phi_{\ell}^{\prime}(\lambda, 1, q)
+\beta_{(\ell)}\phi_{\ell}(\lambda, 1, q))
\prod_{k=1}^{\frac{\ell}{2}}(a_{k}-\lambda)\\
=&\frac{1}{C_{\ell-1}}\prod_{n=1}^{\infty}
\frac{\lambda_{\ell,\beta_{(\ell)},n}(q)-\lambda}{(n+\frac{\ell-1}{2})^{2}\pi^{2}}
\prod_{k=1}^{\frac{\ell}{2}}(a_{k}-\lambda)\\
=&\prod_{k=\frac{\ell}{2}+1}^{\infty}
\frac{\lambda_{\ell,\beta_{(\ell)},k-\frac{\ell}{2}}(q)-\lambda}
{(k-\frac{1}{2})^{2}\pi^{2}}
\prod_{k=1}^{\frac{\ell}{2}}\frac{a_{k}-\lambda}{(k-\frac{1}{2})^{2}\pi^{2}}\\
=&\phi_{0}^{\prime}(\lambda, 1, q_{0}),
\end{align*}
and
\begin{align*}
\phi_{\ell}(\lambda, 1, q)
\prod_{k=1}^{\frac{\ell}{2}}(b_{k}-\lambda)
=\phi_{0}(\lambda, 1, q_{0}).
\end{align*}
Similarly, there exists $\tilde{q}_{0}\in L^{2}(0,1)$ such that
\begin{align*}
(\phi_{\ell}^{\prime}(\lambda, 1, \tilde{q})
+\beta_{(\ell)}\phi_{\ell}(\lambda, 1, \tilde{q}))
\prod_{k=1}^{\frac{\ell}{2}}(a_{k}-\lambda)
=\phi_{0}^{\prime}(\lambda, 1, \tilde{q}_{0}),\\
\phi_{\ell}(\lambda, 1, \tilde{q})
\prod_{k=1}^{\frac{\ell}{2}}(b_{k}-\lambda)
=\phi_{0}(\lambda, 1, \tilde{q}_{0}).
\end{align*}
Therefore, it holds 
\begin{align*}
F(z):=&(\phi_{\ell}(z^2, 1, q)\phi_{\ell}^{\prime}(z^2, 1, \tilde{q})
-\phi_{\ell}(z^2, 1, \tilde{q})\phi_{\ell}^{\prime}(z^2, 1, q))
\prod_{k=1}^{\frac{\ell}{2}}(a_{k}-z^2)(b_{k}-z^2)\\
=&(\phi_{\ell}(z^2, 1, q)(\phi_{\ell}^{\prime}(z^2, 1, \tilde{q})+\beta_{(\ell)}\phi_{\ell}(z^2, 1, \tilde{q})))\\
&-\phi_{\ell}(z^2, 1, \tilde{q})(\phi_{\ell}^{\prime}(z^2, 1, q)+\beta_{(\ell)}\phi_{\ell}(z^2, 1,q))))
\prod_{k=1}^{\frac{\ell}{2}}(a_{k}-z^2)(b_{k}-z^2)\\
=&\phi_{0}(z^2, 1, q_{0})\phi_{0}^{\prime}(z^2, 1, \tilde{q}_{0})-\phi_{0}(z^2, 1, \tilde{q}_{0})\phi_{0}^{\prime}(z^2, 1, q_{0}).
\end{align*}
Recall (cf. \cite[Theorem 1.1(i)]{Koro2004} or \cite[Theorem 1(1)]{BBP2017}) that there exists an even function $g\in PW_{2}$ and a constant $C$ such that
\begin{align*}
z^{2}\left(\phi_{0}(z^{2}, 1, q_{0})\phi_{0}^{\prime}(z^{2}, 1, \tilde{q}_{0})-\phi_{0}
(z^{2}, 1, \tilde{q}_{0})\phi_{0}^{\prime}(z^{2}, 1, q_{0})\right)
=g(z)+C
\end{align*}
and hence $z^{2}F(z)=g(z)+C$.
This implies
\begin{align*}
&z^{2\ell+2}(\phi_{\ell}(z^2, 1, q)\phi_{\ell}^{\prime}(z^2, 1, \tilde{q})
-\phi_{\ell}(z^2, 1, \tilde{q})\phi_{\ell}^{\prime}(z^2, 1, q))\\
=&z^{2\ell+2}F(z)\prod_{k=1}^{\frac{\ell}{2}}\frac{1}{(a_{k}-z^2)(b_{k}-z^2)}\\
=&(g(z)+C)\prod_{k=1}^{\frac{\ell}{2}}\frac{z^{4}}{(a_{k}-z^2)(b_{k}-z^2)}
-C+C.
\end{align*}

Write 
\begin{align*}
f(z)=(g(z)+C)\prod_{k=1}^{\frac{\ell}{2}}\frac{z^{4}}{(a_{k}-z^2)(b_{k}-z^2)}-C,
\end{align*}
and it suffices to prove that $f\in PW_{2}$.

It is clear that $f$ is an entire function of exponential type at most $2$. Moreover, a simple computation gives 
\begin{align*}
|f(z)|=&\left|g(z)\prod_{k=1}^{\frac{\ell}{2}}
\frac{z^{4}}{(a_{k}-z^2)(b_{k}-z^2)}
+C\frac{\prod_{k=1}^{\frac{\ell}{2}}z^{4}
-\prod_{k=1}^{\frac{\ell}{2}}(a_{k}-z^2)(b_{k}-z^2)}
{\prod_{k=1}^{\frac{\ell}{2}}(a_{k}-z^2)(b_{k}-z^2)}\right|\\
=&O(|g(z)|)+\frac{O(|z|^{2\ell-2})}{|z|^{2\ell}+O(|z|^{2\ell-2})}\\
=&O(|g(z)|)+O\left(\frac{1}{|z|^{2}}\right),
\end{align*}
as $|z|\rightarrow\infty$,
which implies that $f$ is square integrable on $\mathbb{R}$ and hence $f\in PW_{2}$, as desired.
\end{proof}

Roughly speaking, due to Corollary \ref{c2.5}, the proposition below provides the converse of Proposition \ref{l3.5}.

\begin{proposition}\label{l3.6}
Suppose $\ell\in\mathbb{N}_{0}$, $a\in(0,1]$ and $\{a_{1},\cdots, a_{\ell}\}$ with $a_{k}\in(0,\infty)$.
Then there exists $\varepsilon>0$ such that for any even function $f \in PW_{2a}$ which is real on $\mathbb{R}$ with $\|f\|_{2}<\varepsilon$, 
and $f(\pm a_{k})=f(0)$ for $k=1$, $\cdots$, $\ell$, 
there exists a potential
function $q\in L^{2}(0,1)$ satisfying $q=0$ on $(a,1)$, 
such that the following hold:
\begin{enumerate}

 \item [\textnormal{(1)}] If $\ell$ is even, then
    \begin{align}\label{e3.2}
    zH(z)\left(\frac{A(z)\cos(az-\frac{\ell\pi}{2})}{H_{e1}(az)}
    -\frac{B(z)\sin(az-\frac{\ell\pi}{2})}{H_{e2}(az)}\right) 
    =f(z)-f(0).
    \end{align}

 \item [\textnormal{(2)}] If $\ell$ is odd, then
    \begin{align}\label{e3.3}
    H(z)\left(\frac{A(z)\cos(az-\frac{\ell\pi}{2})}{H_{o1}(az)}
    -\frac{z^{2}B(z)\sin(az-\frac{\ell\pi}{2})}
    {H_{o2}(az)}\right) 
    =f(z)-f(0),
    \end{align}
    
\end{enumerate}
where
\begin{gather*}
H(z)=\prod_{k=1}^{\ell}
(z^2-a_{k}^{2}),\\
A(z)=z\phi_{\ell}(z^2, a, q),~~
B(z)=\phi_{\ell}^{\prime}(z^{2},a,q)
+\frac{\beta_{(\ell)}}{a}\phi_{\ell}(z^{2},a,q),\\
H_{e1}(z)=\prod_{k=1}^{\frac{\ell}{2}}
\left(z^2-\left(k-\frac{1}{2}\right)^{2} \pi^{2}\right),~~
H_{e2}(z)=\prod_{k=1}^{\frac{\ell}{2}}
(z^{2}-k^{2} \pi^2),\\
H_{o1}(z)=\prod_{k=1}^{\frac{\ell-1}{2}}
\left(z^2-\left(k-\frac{1}{2}\right)^2\pi^2\right),~~
H_{o2}(z)=\prod_{k=1}^{\frac{\ell+1}{2}}
(z^2-k^2\pi^2).
\end{gather*}
\end{proposition}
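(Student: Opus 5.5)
Since $f(\pm a_k)=f(0)$ and $f$ is even, the plan is to divide out the zeros of $f-f(0)$ at $\pm a_k$, reduce to the $\ell=0$ Korotyaev--Baranov--Belov--Poltoratski characterization, and then push the result back from $\ell=0$ to general $\ell$ by the spectral-gluing device used in the proof of Proposition \ref{l3.5}. By the rescaling argument of Proposition \ref{l3.5} (via \eqref{e3.0}, $q_a(t)=a^2q(at)$), it suffices to treat $a=1$. We then look for $q$ supported on $(0,1)$; the requirement $q=0$ on $(a,1)$ comes from the rescaling, extending $q$ by $0$.

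\textbf{Step 1: divide out the prescribed zeros.} Take $\ell$ even; the odd case is handled the same way, with the $z^2$ factor distributed as in \eqref{e3.3}. Since $f$ is even and $f(\pm a_k)=f(0)$, the function $f(z)-f(0)$ vanishes at $0$ (doubly) and at $\pm a_k$. Hence
\[
g(z):=\frac{(f(z)-f(0))\,H_{e1}(z)H_{e2}(z)}{z^{2}H(z)}\cdot z^{2}
\]
is entire, even and real, and of exponential type $\le 2$. The degree count is chosen so that $g(z)-g(0)$, or $g$ minus an explicit constant, is in $PW_2$. The point is that $H_{e1}H_{e2}$ has degree $2\ell$, which matches $H$, so the correction is bounded and tends to a constant, as in the estimate closing the proof of Proposition \ref{l3.5}. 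Moreover $\|g-\mathrm{const}\|_{PW_2}\le C\|f\|_2$ by a Bernstein/Plancherel-type estimate (division by polynomials with fixed real zeros is a bounded operation on $PW$ after subtracting the value, via the standard divided-difference bound). So $g$ is small when $f$ is.

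\textbf{Step 2: invoke the $\ell=0$ converse.} By the converse half of \cite[Theorem 1]{BBP2017} (Korotyaev \cite[Theorem 1.1]{Koro2004}), for small even real $h\in PW_2$ there is $q_0\in L^2(0,1)$ with
\[
z\bigl(\phi_0(z^2,1,q_0)\cos z-z^{-1}\cdot z\,\phi_0'(z^2,1,q_0)\sin z\bigr)\cdot(\text{normalization})=h(z)-h(0).
\]
This is the Wronskian against $q\equiv0$, for which $\phi_0(z^2,1,0)=\sin z/z$. Moreover, the spectra $\sigma(0,q_0,0)$ and $\sigma(0,q_0,\infty)$ are small perturbations of $\{(k-\tfrac12)^2\pi^2\}$ and $\{k^2\pi^2\}$, so for $\varepsilon$ small they remain ordered and close to the unperturbed values. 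Apply this with $h=g$.

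\textbf{Step 3: remove $\ell/2$ eigenvalues from each spectrum and re-glue to index $\ell$.} Delete the lowest $\ell/2$ elements of each $\ell=0$ spectrum; these are close to the zeros of $H_{e1}$ and $H_{e2}$. The remaining interlacing sequences satisfy the asymptotics \eqref{e2.13}--\eqref{e2.12} with the shift $(n+\frac{\ell-1}{2})$, $(n+\frac{\ell}{2})$. Corollary \ref{c2.5} then gives $q\in L^2$ with $\sigma(\ell,q,\beta_{(\ell)})$ and $\sigma(\ell,q,\infty)$ equal to these sequences. By the product formula (Proposition \ref{p2.2}), as in Proposition \ref{l3.5}, $A$ and $B$ equal the $\ell=0$ characteristic functions divided by the deleted factors. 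For $q=0$ the analogous identities give $\sin(z-\ell\pi/2)$ and $\cos(z-\ell\pi/2)$ divided by $H_{e2},H_{e1}$. Substituting into the Step 2 identity, the deleted factors would combine with $H_{e1},H_{e2},H$ to return \eqref{e3.2}.

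\textbf{Main obstacle.} The hard part is making the factors match exactly, not just asymptotically. The deleted eigenvalues of $q_0$ are only near $(k-\tfrac12)^2\pi^2$ and $k^2\pi^2$; they do not equal them, nor do they equal the prescribed $a_k^2$. So the naive scheme of Steps 1--3 does not close, and the plan must be modified. I would try first to build the $a_k$ into the data. Choose $h$ so that, after inverting, the $\ell=0$ potential has the prescribed low eigenvalues, by solving a finite-dimensional fixed-point problem near $f=0$ with the implicit function theorem. The low eigenvalues depend continuously on $q_0$, and hence on $h$, with nondegenerate derivative; this is why $\varepsilon$ is needed. The conditions $f(\pm a_k)=f(0)$ are exactly the compatibility conditions that make the quotient in Step 1 entire.
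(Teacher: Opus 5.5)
\textbf{Verdict: genuine gap, though your own scheme already closes.} Steps 1--3 work; the problem is your last paragraph. Your claim that the deleted eigenvalues of $q_0$ are only close to $k^2\pi^2$ and $(k-\frac{1}{2})^2\pi^2$ is false. The implicit-function patch you substitute is also not an argument. Once $f$ is fixed, the function fed to the $\ell=0$ converse is fixed (it minus its value at $0$ must equal your quotient), so there is nothing to tune, and the asserted nondegeneracy is never justified. As written, the proof stops before the final identity.

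\textbf{The missing observation.} The factor $H_{e1}H_{e2}$ in your quotient pins the low spectrum of $q_0$ exactly. Apply the $\ell=0$ converse to $F:=g+f(0)\in PW_{2}$, not to $g$, which tends to $-f(0)$ on $\mathbb{R}$. Its correct form is \eqref{e3.2} with $\ell=0$ (recall $\beta_{(0)}=0$):
\[
z\bigl(z\phi_{0}(z^{2},1,q_{0})\cos z-\phi_{0}^{\prime}(z^{2},1,q_{0})\sin z\bigr)=F(z)-F(0)=g(z).
\]
The function $g$ vanishes at $\pm k\pi$ and $\pm(k-\frac{1}{2})\pi$ for $1\le k\le \ell/2$; if such a point is some $a_j$, the factor $f-f(0)$ vanishes there instead. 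Evaluating at $z=k\pi$ gives $(-1)^{k}k^{2}\pi^{2}\phi_{0}(k^{2}\pi^{2},1,q_{0})=0$. Evaluating at $z=(k-\frac{1}{2})\pi$ gives $\phi_{0}^{\prime}((k-\frac{1}{2})^{2}\pi^{2},1,q_{0})=0$. The Rouch\'e localization in the construction of $q_0$ puts the square roots of the eigenvalues within $\pi/6$ of $k\pi$, resp.\ $(k-\frac{1}{2})\pi$. Hence these are exactly the $k$-th Dirichlet and Neumann eigenvalues, and the deleted factors are exactly $(-1)^{\ell/2}H_{e2}$ and $(-1)^{\ell/2}H_{e1}$. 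Since $C_{\ell}$ and $C_{\ell-1}$ are precisely the products of these lattice values and $C_0=C_{-1}=1$, Corollary \ref{c2.5} and Proposition \ref{p2.2} give
\[
\phi_{0}(z^{2},1,q_{0})=(-1)^{\frac{\ell}{2}}H_{e2}(z)\phi_{\ell}(z^{2},1,q),\qquad
\phi_{0}^{\prime}(z^{2},1,q_{0})=(-1)^{\frac{\ell}{2}}H_{e1}(z)B(z).
\]
Substituting and using $(-1)^{\ell/2}\cos z=\cos(z-\frac{\ell\pi}{2})$ (likewise for $\sin$) yields \eqref{e3.2} for $F$ with $H_{e1}H_{e2}$ in place of $H$. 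Multiplying by $H/(H_{e1}H_{e2})$ gives \eqref{e3.2} for $f$.

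\textbf{Comparison with the paper.} With this repair your route is a genuinely different proof. The paper makes the same first reduction (your $g$ is its $F-f(0)=(f-f(0))/I$) but never passes through $\ell=0$. It reruns the Baranov--Belov--Poltoratski construction directly at index $\ell$. It adds small $PW_{1}$ interpolants of the samples of $f$ on $\pi\mathbb{Z}$ and $\pi(\mathbb{Z}-\frac{1}{2})$ to $\sin(z-\frac{\ell\pi}{2})$ and $\cos(z-\frac{\ell\pi}{2})$. The samples at $\pm k\pi$ and $\pm(k-\frac{1}{2})\pi$, $k\le\ell/2$, are forced to vanish (\eqref{e3.71} and the display after it); this is the same constraint that pins your eigenvalues. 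It then locates zeros by Rouch\'e, gets the $l^{2}$ asymptotics from the discrete Hilbert transform, and only at the end applies Corollary \ref{c2.5} and Proposition \ref{p2.2} at index $\ell$. Your version instead treats the $\ell=0$ converse of \cite{BBP2017} as a black box and runs the gluing of Proposition \ref{l3.5} in reverse. That is shorter and makes the duality with Proposition \ref{l3.5} explicit. The cost is that the localization of the spectra of $q_0$ must be taken from the proof of that converse, not just its statement.

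\textbf{Odd $\ell$.} Here the two $\ell=0$ spectra swap roles. The same computation puts the product with zeros $\pm k\pi$, $k\le\frac{\ell-1}{2}$, under $A$, and the one with zeros $\pm(k-\frac{1}{2})\pi$, $k\le\frac{\ell+1}{2}$, under $z^{2}B$. This is the reverse of the printed $H_{o1},H_{o2}$. As printed, the left side of \eqref{e3.3} generally has poles, because $\cos(z-\frac{\ell\pi}{2})$, resp.\ $\sin(z-\frac{\ell\pi}{2})$, does not vanish at the zeros of $H_{o1}$, resp.\ $H_{o2}$. So ``handled the same way'' holds only after that correction.
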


\begin{proof}
We only prove the proposition in the case that $\ell$ is even and the odd case is similar.

We split the proof into two cases: $a=1$ and $a\neq1$.

\textbf{Case 1}: $a=1$. The proof consists of the following four steps.

\textbf{Step 1}: We first prove the result under the assumption that
$\{a_{1},\cdots,a_{\ell}\}=\{k\pi, (k-1/2)\pi\mid k=1,\cdots,\ell/2\}$.
Indeed, write 
\begin{align*}
F(z)=\frac{f(z)-f(0)}{I(z)}+f(0),
\end{align*}
where
\begin{align*}
I(z)=\frac{H(z)}{\prod_{k=1}^{\frac{\ell}{2}}
\left(z^2-\left(k-\frac{1}{2}\right)^{2} \pi^{2}\right)
\prod_{k=1}^{\frac{\ell}{2}}
(z^{2}-k^{2} \pi^2)}.
\end{align*}
It is clear that $F \in PW_{2}$ is an even function, which is real on $\mathbb{R}$. It only needs to show that $\|f\|_{2}\leq c\|F\|_{2}$ for some positive constant $c$ independent of $f$. 

In fact, let $r=\max\{a_{1},a_{2},\ldots,a_{\ell},\ell\pi/2\}+1$ and $S_{r}=\{z\in\mathbb{C}\mid|z|\leq r\}$. Then the maximum modulus theorem shows that there exists $\zeta\in\partial S_{r}$ such that
$\sup_{z\in S_{r}}|f(z)|\leq|f(\zeta)|$ and hence
\begin{align*}
\|f\|_{2}^{2}
=&\int_{\mathbb{R}}|f(x)|^{2}dx\\
\leq& 2r|f(\zeta)|^{2}+\int_{\mathbb{R}\setminus(-r,r)}|f(x)|^{2}dx\\
=&2r|(F(\zeta)-F(0))I(\zeta)+F(0)|^{2}
+\int_{\mathbb{R}\setminus(-r,r)}|(F(x)-F(0))I(x)+F(0)|^{2}dx\\
\leq&4r|F(\zeta)|^{2}
\max_{z\in\partial S_{r}}|I(z)|^{2}
+4r|F(0)|^{2}
\max_{z\in\partial S_{r}}|1-I(z)|^{2}\\
&+2\int_{\mathbb{R}\setminus(-r,r)}
|F(x)I(x)|^{2}dx 
+2|F(0)|^{2}\int_{\mathbb{R}\setminus(-r,r)}
|1-I(x)|^{2}dx. 
\end{align*}
Since $I$ is bounded on $\mathbb{R}\setminus(-r,r)$, and
$1-I$ is square integrable on $\mathbb{R}\setminus(-r,r)$, then
\begin{align*}
c_{1}&=\max\left\{
4r\max_{z\in\partial S_{r}}|I(z)|^{2},~
4r\max_{z\in\partial S_{r}}|1-I(z)|^{2},\right.\\
&\left.\qquad\qquad
2\sup_{x\in\mathbb{R}\setminus(-r,r)}|I(x)|^{2},~
2\int_{\mathbb{R}\setminus(-r,r)}|1-I(x)|^{2}dx
\right\}<\infty,
\end{align*}
which leads to
\begin{align*}
\|f\|_{2}^{2}
\leq & c_{1}\left(|F(\zeta)|^{2}+2|F(0)|^{2}
+\int_{\mathbb{R}\setminus(-r,r)}|F(x)|^{2}dx\right)\\
\leq& c_{1}(|F(\zeta)|^{2}+2|F(0)|^{2}+\|F\|_{2}^{2}).
\end{align*}

Recall that for $F\in PW_{2}$, it is the Fourier transform of an $L^2$-function $\hat{F}$ supported on $(-2, 2)$, that is, 
\begin{align*}
F(z)=\int_{-2}^{2}\hat{F}(x)e^{izx}dx,~~z\in\mathbb{C}.
\end{align*}
Then H\"{o}lder's inequality and Plancherel's theorem 
\cite[Theorem 9.13]{Rud1987} together yield 
\begin{align}\label{e3.10}
|F(z)|
\leq&\|\hat{F}\|_{2}\|\chi_{(-2,2)}e^{izx}\|_{2}
=\frac{\|F\|_{2}\|\chi_{(-2,2)}e^{izx}\|_{2}}{\sqrt{2\pi}}
\leq\frac{\sqrt{2}e^{2r}\|F\|_{2}}{\sqrt{\pi}}
\end{align}
for all $z\in S_{r}$. Therefore,
\begin{align*}
\|f\|_{2}\leq c_{1}\left(3\frac{\sqrt{2}e^{2r}}{\sqrt{\pi}}+1\right)\|F\|_{2}.
\end{align*}

\textbf{Step 2}: Now let even function  $f \in PW_{2}$ with $f(\pm k\pi)=f(\pm(k-1/2)\pi)=f(0)$ for $k=1,\cdots,\ell/2$ and $\|f\|_{2}<\varepsilon$ (where the choice of $\varepsilon$ will be specified later). 
We will construct two functions $A_{1},B_{1}$ satisfying
\begin{align}\label{e3.8}
zH(z)\left(\frac{A_{1}(z)\cos(z-\frac{\ell\pi}{2})}{H_{e1}(z)}
-\frac{B_{1}(z)\sin(z-\frac{\ell\pi}{2})}{H_{e2}(z)}\right) 
=f(z)-f(0).
\end{align}

First observe that it is needed that
\begin{align}\label{e3.71}
(A_{1}H_{e2})(\pm k\pi)=0,~~\textnormal{for}~k=1,\cdots,\ell/2,
\end{align}
and
\begin{align*}
(B_{1}H_{e1})\left(\pm\left(k-\frac{1}{2}\right)\pi\right)=0,
~~\textnormal{for}~k=1,\cdots,\ell/2.
\end{align*}
The facts that $H = H_{e1}H_{e2}$, $f$ is an even function and \eqref{e3.8} together show 
\begin{gather}\label{e3.4}
\begin{split}
&(A_{1}H_{e2})(0)=f'(0)=0,\\
(A_{1}H_{e2})\left(\pm\left(n+\frac{\ell}{2}\right)\pi\right)
=&(-1)^{n} \frac{f((n+\frac{\ell}{2})\pi)-f(0)}
{\pm(n+\frac{\ell}{2})\pi}:=(-1)^{n+\frac{\ell}{2}}s_{n+\frac{\ell}{2}},
\end{split}
\end{gather}
and
\begin{align*}
(B_{1}H_{e1})\left(\pm\left(n+\frac{\ell-1}{2}\right)\pi\right)
=(-1)^{n+1} 
\frac{f((n+\frac{\ell-1}{2})\pi)-f(0)}
{\pm(n+\frac{\ell-1}{2})\pi}.
\end{align*}
Those together with the fact that $f\in PW_{2}$ imply the sequences $\{(A_{1}H_{e2})(j\pi)\}_{j\in\mathbb{Z}}$ and $\{(B_{1}H_{e1})((j-1/2)\pi)\}_{j\in\mathbb{Z}}$ belong to $l^{2}$, equivalently, it holds
\begin{align*}
\sum_{j=-\infty}^{\infty}(A_{1}H_{e2})(j\pi)e^{-ij\pi x}~~
\textnormal{and}~~
\sum_{j=-\infty}^{\infty}(B_{1}H_{e1})\left(\left(j-\frac{1}{2}\right)\pi\right)
e^{-i(j-\frac{1}{2})\pi x}
\end{align*}
belong to $L^{2}(-1,1)$. Define
\begin{align}
h(z)=&\int_{-1}^{1}\frac{1}{2}
\sum_{j=-\infty}^{\infty}(A_{1}H_{e2})(j\pi)e^{-ij\pi x}
\cdot e^{izx}dx,\label{e3.6}\\
g(z)=&\int_{-1}^{1}\frac{1}{2}
\sum_{j=-\infty}^{\infty}(B_{1}H_{e1})
\left(\left(j-\frac{1}{2}\right)\pi\right)e^{-i(j-\frac{1}{2})\pi x}
\cdot e^{izx}dx.\nonumber
\end{align}
Then $h,g\in PW_{1}$ and they are real on $\mathbb{R}$ by Lemma \ref{l3.3}. Moreover,

\begin{align*}
h(j\pi)=(A_{1}H_{e2})(j\pi),~~
g\left(\left(j-\frac{1}{2}\right)\pi\right)
=(B_{1}H_{e1})\left(\left(j-\frac{1}{2}\right)\pi\right),~~j\in\mathbb{Z}.
\end{align*}

Now define
\begin{align}\label{e3.12}
(A_{1}H_{e2})(z)
=\sin \left(z-\frac{\ell \pi}{2}\right)+h(z),~
(B_{1}H_{e1})(z)
=\cos\left(z-\frac{\ell \pi}{2}\right)+g(z).
\end{align}
A simple computation shows
\begin{align*}
Q(z)=& z\left((A_{1}H_{e2})(z)\cos\left(z-\frac{\ell\pi}{2}\right)
-(B_{1}H_{e1})(z)\sin\left(z-\frac{\ell\pi}{2}\right)\right) \\
=& z\left(h(z)\cos\left(z-\frac{\ell\pi}{2}\right)
+g(z)\sin\left(z-\frac{\ell\pi}{2}\right)\right),
\end{align*}
and
$Q/z\in PW_{2}$ coincides with $(f-f(0))/z$ at $\{j\pi/2\}_{j\in\mathbb{Z}}$.
Recall that $\{e^{ij\pi t/2}\mid j\in\mathbb{Z}\}$ is complete in $L^{2}(-2,2)$, so again from Lemma \ref{l3.1}, it follows that $Q(z)=f(z)-f(0)$.
Thus, the constructed functions $A_{1}$ and $B_{1}$ satisfy \eqref{e3.8}.

\textbf{Step 3}: We will show that the square of the zeros of $A_{1}/z$ and $B_{1}$ satisfy \eqref{e2.9} and \eqref{e2.11}, respectively and are interlacing.

We investigate the asymptotics of the zeros of
$(A_{1}H_{e2})(z)$ and those of $(B_{1}H_{e1})(z)$ follow in a similar way. 
First observe 
\begin{align}\label{e3.11}
h(z)=\sum_{|j|>\frac{\ell}{2}}
\frac{\sin z}{z-j\pi}s_{j},
~~z\in\mathbb{C}.
\end{align}
Moreover, we have
\begin{align*}
\frac{\sin z}{z-j\pi}
=(-1)^{j}\frac{\sin(z-j\pi)}{z-j\pi}
=\frac{(-1)^{j}}{2}\int_{-1}^{1}e^{-ij\pi x}\cdot e^{izx}dx,
~~z\in\mathbb{C},
\end{align*}
which together with \eqref{e3.71}-\eqref{e3.6} and H\"{o}lder's inequality implies, for any fixed $z\in\mathbb{C}$, it holds
\begin{align*}
&\left|h(z)-\sum_{|j|>\frac{\ell}{2}}^{|j|\leq N}
\frac{\sin z}{z-j\pi}s_{j}\right|\\
=&\left|\int_{-1}^{1}\frac{1}{2}
\sum_{|j|>N}(A_{1}H_{e2})(j\pi)e^{-ij\pi x}
\cdot e^{izx}dx\right|\\
\leq&\sqrt{2}e^{|\textnormal{Im}z|}
\left\|\frac{1}{2}
\sum_{|j|>N}(A_{1}H_{e2})(j\pi)e^{-ij\pi x}\right\|_{L^{2}(-1,1)}
\longrightarrow 0,
~~\textnormal{as}~N\rightarrow\infty.
\end{align*}
Furthermore, $h$ is also an odd function since $s_{j}=-s_{-j}$ for $|j|>\ell/2$. 

Next, we will prove that one can choose $\varepsilon$ small enough such that $|h(z)|<1/4$ for $|\textnormal{Im}z|\leq\pi/6$ and $\|f\|_{2}<\varepsilon$. 
Similar to \eqref{e3.10}, we have
$|f(x)|\leq \sqrt{2}\|f\|_{2}/\sqrt{\pi}$ for all $x\in\mathbb{R}$, which together with \eqref{e3.4} and \eqref{e3.11} implies that 
\begin{align*}
|h(z)|\leq\frac{2\sqrt{2}}{\sqrt{\pi}}\|f\|_{2}\sum_{|j|>\frac{\ell}{2}}\left|
\frac{\sin z}{(z-j\pi)j\pi}
\right|.
\end{align*}
If $|\textnormal{Im}z|\leq\pi/6$ and $|\textnormal{Re}z-k\pi|\leq\pi/2$ for fixed $k\in\mathbb{Z}$, then $|\sin z|\leq e^{\pi/6}$. In addition, again by H\"{o}lder's inequality and \cite[(9), pp.187]{Ahl1978}, we have
\begin{align*}
\sum_{j\neq k}
\left|
\frac{\sin z}{(z-j\pi)j\pi}
\right|
\leq&
\sum_{j > k}\left|
\frac{e^{\frac{\pi}{6}}}{((j-k-\frac{1}{2})\pi)j\pi}\right|
+\sum_{j < k}\left|
\frac{e^{\frac{\pi}{6}}}{((k-\frac{1}{2}-j)\pi)j\pi}\right|\\
\leq&\frac{2e^{\frac{\pi}{6}}}{\pi^{2}}
\left(
\sum_{j\in\mathbb{Z}}\frac{1}{(j-\frac{1}{2})^{2}}
\right)^{1/2}
\left(
\sum_{j\neq 0}\frac{1}{j^{2}}
\right)^{1/2}=\frac{2e^{\frac{\pi}{6}}}{\sqrt{3}}.
\end{align*}
Moreover,
\begin{align*}
\left|
\frac{\sin z}{(z-k\pi)k\pi}
\right|
\leq
\left|
\frac{\sin(z-k\pi)}{(z-k\pi)}
\right|
\leq
\sup_{\substack{|\textnormal{Im}z|\leq\pi/6\\|\textnormal{Re}z|\leq\pi/2}}
\left|
\frac{\sin z}{z}
\right|\leq 6e^{\frac{\pi}{6}},
~~k\neq0.
\end{align*}

Consequently, for $|\textnormal{Im}z|\leq\pi/6$, we get
\begin{align*}
|h(z)|\leq \frac{16\sqrt{2}}{\sqrt{\pi}}e^{\frac{\pi}{6}}\|f\|_{2}.
\end{align*}
Thus we need only to choose any positive $\varepsilon$ such that $\varepsilon<\sqrt{\pi}/(64\sqrt{2}e^{\pi/6})$. 

Now we determine the zero set of $A_{1}/z$.
Note that
$\inf_{|z-j\pi|=\pi/6}|\sin z|=1/2$ for all $j\in \mathbb{Z}$ and it follows from the Rouch\'{e} theorem that
\begin{align*}
(A_{1}H_{e2})(z)
=\sin\left(z-\frac{\ell\pi}{2}\right)+h
\end{align*}
has a unique zero in the disc $\{z\in\mathbb{C}\mid |z-j\pi|<\pi/6\}$. 
Moreover, the fact that $h$ is real on $\mathbb{R}$ and odd implies that for all $j\in\mathbb{Z}$, 
\begin{align*}
&(A_{1}H_{e2})\left(j\pi+\frac{\pi}{6}\right)
(A_{1}H_{e2})\left(j\pi-\frac{\pi}{6}\right)\\
=&\left(\frac{(-1)^{j-\frac{\ell}{2}}}{2}+h\left(j\pi+\frac{\pi}{6}\right)\right)
\left(-\frac{(-1)^{j-\frac{\ell}{2}}}{2}+h\left(j\pi-\frac{\pi}{6}\right)\right)\\
<&0,
\end{align*}
which shows that the unique zero of $A_{1}H_{e2}$, lying in the disc $\{z\in\mathbb{C}\mid |z-j\pi|<\pi/6\}$, belongs to $(j\pi-\pi/6,j\pi+\pi/6)$ and the all zeros of $A_{1}H_{e2}$ are symmetric about the origin.
In addition, the zero set of $H_{e2}$ is $\{k\pi\mid k=\pm 1,\cdots,\pm\ell/2\}$, so let $\lambda_{n}$ be the zero of $A_{1}/z$ lying in $((n+\ell/2)\pi-\pi/6, (n+\ell/2)\pi+\pi/6)$.

Next we will prove that $\{\pm\lambda_{n}\}_{n\in\mathbb{N}}$  is exactly the zero set of $A_{1}/z$ and $\lambda_{n}$ satisfies    
\begin{align}\label{e3.5}
\lambda_{n}=\left(n+\frac{\ell}{2}\right)\pi+\frac{C}{n}+\frac{l^{2}(n)}{n},
~~C=-\frac{f(0)}{\pi}.
\end{align}

Since $f\in PW_{2}$, there exists $\hat{f}\in L^{2}(-2,2)$ such that 
\begin{align*}
f(z)=\int_{-2}^{2}\hat{f}(x)e^{izx}dx.
\end{align*}  
which implies the Fourier coefficients of $\hat{f}$ are $\{f(j\pi/2)\}_{j\in\mathbb{Z}}$ and hence $\{f(j\pi/2)\}_{j\in\mathbb{Z}}\in l^{2}$. In particular, $\{f((n+\ell/2)\pi)\}_{n\in\mathbb{N}}\in l^{2}$.

If $f(0)\neq0$, then it follows from \eqref{e3.4} that $\lambda_{n}\neq(n+\ell/2)\pi$ for sufficiently large $n\in\mathbb{N}$.
Thus for sufficiently large $n\in\mathbb{N}$, it holds
\begin{align*}
0=\frac{(A_{1}H_{e2})(\lambda_{n})}{\sin(\lambda_{n}-\frac{\ell}{2}\pi)}
=\frac{\sin(\lambda_{n}-\frac{\ell}{2}\pi)+h(\lambda_{n})}
{\sin(\lambda_{n}-\frac{\ell}{2}\pi)}
=1+\sum_{|j|>\frac{\ell}{2}}\frac{(-1)^{\frac{\ell}{2}}}{\lambda_{n}-j\pi}
s_{j},
\end{align*}
whence
\begin{align}\label{e3.13}
\lambda_{n}=&\left(n+\frac{\ell}{2}\right)\pi
+(-1)^{\frac{\ell}{2}}s_{n+\frac{\ell}{2}}
\left(1+\sum_{\substack{|j|>\frac{\ell}{2}\\j\neq n+\frac{\ell}{2}}} \frac{(-1)^{\frac{\ell}{2}}}{\lambda_{n}-j\pi}s_{j}\right)^{-1}.
\end{align}
A simple computation gives
\begin{align*}
\left|\sum_{\substack{|j|>\frac{\ell}{2}\\j\neq n+\frac{\ell}{2}}} \frac{(-1)^{\frac{\ell}{2}}}{\lambda_{n}-j\pi}s_{j}\right|
\leq&
\sup_{\substack{|j|>\frac{\ell}{2}\\j\neq n+\frac{\ell}{2}}} 
\left|
\frac{(n+\frac{\ell}{2}-j)\pi}{\lambda_{n}-j\pi}
\right|
\left|\sum_{\substack{|j|>\frac{\ell}{2}\\j\neq n+\frac{\ell}{2}}} \frac{s_{j}}{(n+\frac{\ell}{2}-j)\pi}\right|\\
\leq&
\frac{6}{5}
\left|\sum_{\substack{|j|>\frac{\ell}{2}\\j\neq n+\frac{\ell}{2}}} \frac{s_{j}}{(n+\frac{\ell}{2}-j)\pi}\right|.
\end{align*}
Again by \eqref{e3.4}, $\{s_{j}\}_{|j|>\ell/2}\in l^{2}$ and then by the boundedness of the discrete Hilbert transform on $l^{2}$ and the above inequality, we have 
\begin{align*}
\left\{\sum_{\substack{|j|>\frac{\ell}{2}\\j\neq n+\frac{\ell}{2}}} \frac{s_{j}}{(n+\frac{\ell}{2}-j)\pi}\right\}_{n\in\mathbb{N}}
\in l^{2},
~~~\textnormal{and hence}~~
\left\{
\sum_{\substack{|j|>\frac{\ell}{2}\\j\neq n+\frac{\ell}{2}}} \frac{(-1)^{\frac{\ell}{2}}}{\lambda_{n}-j\pi}s_{j}\right\}_{n\in\mathbb{N}}
\in l^{2},
\end{align*}
which combined with \eqref{e3.13} yields
\begin{align*}
\lambda_{n}
=&\left(n+\frac{\ell}{2}\right)\pi
+\left(-\frac{f(0)}{(n+\frac{\ell}{2})\pi}
+\frac{f((n+\frac{\ell}{2})\pi)}{(n+\frac{\ell}{2})\pi}\right)(1+l^{2}(n))^{-1}\\
=&\left(n+\frac{\ell}{2}\right)\pi+\frac{C}{n}+\frac{l^{2}(n)}{n},
\end{align*}
where $C=-f(0)/\pi$.

If $f(0)=0$, let 
$\mathbb{M}=\{n\in\mathbb{N}\mid\lambda_{n}=(n+\ell/2)\pi\}$. If $n\in\mathbb{M}$, then $\lambda_{n}$ satisfies \eqref{e3.5}. If $n\notin\mathbb{M}$, then as in the case $f(0)\neq 0$, $\lambda_{n}$ also satisfies \eqref{e3.5}.

Now it suffices to show that $A_{1}/z$ has no other zeros. Indeed, write 
\begin{align}\label{e3.14}
(A_{1}H_{e2})(z)
=zP
\prod_{k=1}^{\infty}\left(1-\frac{z^{2}}{\lambda_{k}^{2}}\right)
\prod_{k=1}^{\frac{\ell}{2}}(z^{2}-(k\pi)^{2})
\end{align}
for some entire function $P$.  Then it follows from \cite[Lemma 1]{BBP2017} that $P$ is a constant, as desired.

Similarly, the zeros $\mu_{n}$ of $B_{1}$ satisfy $\mu_{n} \in((n+\ell/2)\pi-2\pi/3, (n+\ell/2)\pi-\pi/3)$ and 
\begin{align*}
\mu_{n}=\left(n+\frac{(\ell-1)}{2}\right)\pi+\frac{C}{n}+\frac{l^{2}(n)}{n}
\end{align*}
for some sufficiently small $\varepsilon>0$.

Consequently, $\lambda_{n}^{2}$ and $\mu_{n}^{2}$ satisfy \eqref{e2.9} and \eqref{e2.11}, respectively and are interlacing.

\textbf{Step 4}: We first find $q\in L^{2}(0,1)$ and then proceed to prove $A(z)=A_{1}(z)$ and $B(z)=B_{1}(z)$.

First by Corollary \ref{c2.5} and Step 3, there exists $q \in L^2(0,1)$ such that $\lambda_{\ell, \infty, n}(q)=\lambda_{n}^{2}$ and $\lambda_{\ell, \beta, n}(q)=\mu_{n}^{2}$. Then combine \eqref{e2.5} and \eqref{e3.14}, it follows that 
$A_{1}(z)=c_{2}A(z)$ for some constant $c_{2}$. 
And similarly, we have
$B_{1}(z)=c_{3}B(z)$
for some constant $c_{3}$. Note that \eqref{e2.1} gives
\begin{align*}
c_{2}=&\frac{A_{1}(z)}{A(z)} \\
=&z^{\ell}\left(\prod_{k=1}^{\frac{\ell}{2}}(z^{2}-(k\pi)^{2})\right)^{-1}\\
&\cdot\left(\sin\left(z-\frac{\ell\pi}{2}\right)+h(z)\right)
\left(\sin \left(z-\frac{\ell \pi}{2}\right)
+O\left(|z|^{-1} e^{| \textnormal{Im}(z) \mid}\right)\right)^{-1}.
\end{align*}
Take $z=(n+1/2)\pi$ and we have $c_{2}=1$ as $n\rightarrow\infty$.
Similarly, we have $c_{3}=1$. This completes the proof of the case $a=1$.

\textbf{Case 2}: $a<1$. By \eqref{e3.0}, a simple computation gives
\begin{align*}
f(z)-f(0)
=&zH(z)\left(\frac{z\phi_{\ell}(z^{2},a,q)
\cos(az-\frac{\ell\pi}{2})}{H_{e1}(az)}
\right.\\
&-\left.\frac{(\phi_{\ell}^{\prime}(z^{2},a,q)
+\frac{\beta_{(\ell)}}{a}\phi_{\ell}(z^{2},a,q))
\sin(az-\frac{\ell\pi}{2})}{H_{e2}(az)}\right) \\
=&zH(z)\left(\frac{z\phi_{\ell}(a^{2}z^{2},1,q_{a})
\cos(az-\frac{\ell\pi}{2})}{H_{e1}(az)}
\right.\\
&-\left.\frac{(\phi_{\ell}^{\prime}(a^{2}z^{2},1,q_{a})
+\beta_{(\ell)}\phi_{\ell}(a^{2}z^{2},1,q_{a})
\sin(az-\frac{\ell\pi}{2})}{aH_{e2}(az)}\right).
\end{align*}
Let $w=az$ and $g(w)=a^{2\ell+2}f(z)$. Then 
\eqref{e3.2} holds for some $q\in L^2(0,1)$ if and only if 
the following holds for some $q_{a}\in L^2(0,1)$,
\begin{align}\label{e3.7}
\begin{split}
g(w)-g(0)
=&wH_{a}(w)
\left(\frac{w\phi_{\ell}(w^{2},1,q_{a})
\cos(w-\frac{\ell\pi}{2})}{H_{e1}(w)}
\right.\\
&-\left.\frac{(\phi_{\ell}^{\prime}(w^{2},1,q_{a})
+\beta_{(\ell)}\phi_{\ell}(w^{2},1,q_{a})
\sin(w-\frac{\ell\pi}{2})}{H_{e2}(w)}\right),
\end{split}
\end{align}
where $H_{a}(w)=a^{2\ell}H(w/a)$.

Therefore, repeat the process of the proof of the case $a=1$, we conclude that
there exists a potential function $q_{a}\in L^{2}(0,1)$ such that \eqref{e3.7} holds, which implies \eqref{e3.2} holds if $q(x)=q_{a}(x/a)/a^{2}$ on $(0,a)$ and $q=0$ on $(a,1)$.
This completes the proof of Proposition \ref{l3.6}.
\end{proof}

\section{Proofs of Theorems \ref{t1.1}-\ref{c1.1}}\label{s4}

In this section, we will prove Theorems \ref{t1.1}-\ref{c1.1}.

\begin{proof}[The proof of Theorem \ref{t1.1}]
(1)$\Rightarrow$(2). We only need to prove it in the case that $\ell$ is even since the odd case is similar.

Let $E=\{\pm\sqrt{\Lambda}\cup\{\pm a_{k}\}_{k=1}^{\ell}\cup\{0,1\}\}$ with $\pm\sqrt{\Lambda}\cap(\{\pm a_{k}\}_{k=1}^{\ell}\cup\{0,1\})=\emptyset$ and $a_{k}\in (1,\infty)$ for $k=1,\cdots,\ell$.
Recall (cf. \cite[Theorem 1.8]{Sed2003}) that a change of a finite number of values in the exponential system does not influence on the completeness. Thus suppose 
$\{e^{i\lambda t}\mid\lambda\in E\}$  is not complete in $L^{2}(-2a,2a)$, then it follows from Lemma \ref{l3.1} that $E$ is not a uniqueness set for $PW_{2a}$. By Lemma \ref{l3.2}, there exists $f\in PW_{2a}$,  not identically zero, such that $f$ vanishes on $E$ and $f(\mathbb{R})\subseteq\mathbb{R}$.
Also, at least one of the functions $f(z)$ and $f(z)/(z-1)$ is not odd.
So without loss of generality, assume $f$ is not odd.
Define $f_{\textnormal{e}}(z)=f(z)+f(-z)$ and hence $f_{\textnormal{e}}$ is a nonzero even function that vanishes on $E\setminus\{1\}$.
It follows from Proposition \ref{l3.6} that there exists $q\in L^2(0,1)$ such that $q$ and $f_{\textnormal{e}}$ satisfy \eqref{e3.2} and $q=0$ on $(a,1)$.
Similarly, for $\tilde{f}_{\textnormal{e}}=f_{\textnormal{e}}/2$, there exists $\tilde{q} \in L^2(0,1)$ such that $\tilde{q}$ and $\tilde{f}_{\textnormal{e}}$ satisfy \eqref{e3.2} and $\tilde{q}=0$ on $(a,1)$. 

Combining \eqref{e3.2} and the fact that $\tilde{f}_{\textnormal{e}}(\sqrt{\lambda})=f_{\textnormal{e}}(\sqrt{\lambda})=0$ for $\lambda\in\Lambda\cup\{0\}$, we get
\[
\phi_{\ell}(\lambda, a, q)\phi_{\ell}^{\prime}(\lambda, a, \tilde{q})
-\phi_{\ell}(\lambda, a, \tilde{q})\phi_{\ell}^{\prime}(\lambda, a, q)
=0,
~~\lambda\in\Lambda,
\]
which together with \eqref{e3.1} and $q=\tilde{q}=0$ on $(a,1)$ implies 
$m=\tilde{m}$ on $\Lambda$. Therefore, $q=\tilde{q}$ a.e. on $(0,1)$ by the assumption.
Thus, again by \eqref{e3.2}, we have $f_{\textnormal{e}}=f_{\textnormal{e}}/2\equiv 0$, a contradiction, the desired implication follows immediately.

\vspace{0.2cm}

(2)$\Rightarrow$(1). It follows from Proposition \ref{l3.5} that
\begin{align*}
(\phi_{\ell}(z^2, 1, q)\phi_{\ell}^{\prime}(z^2, 1, \tilde{q})
-\phi_{\ell}(z^2, 1, \tilde{q})\phi_{\ell}^{\prime}(z^2, 1, q))
\prod_{n=1}^{2\ell+2}(z-a_{n})=f(z)+C
\end{align*}
for some $f\in PW_{2a}$ and some constant $C$.

Let $x\in\pm \sqrt{\Lambda} \cup\left\{a_{n}\right\}_{n=1}^{2\ell+2}$ with $a_{n}\notin\pm\sqrt{\Lambda}$. Then the condition $m=\tilde{m}$ on $\Lambda$ yields
\begin{align*}
f(x)+C=(\phi_{\ell}(x^2, 1, q)\phi_{\ell}^{\prime}(x^2, 1, \tilde{q})
-\phi_{\ell}(x^2, 1, \tilde{q})\phi_{\ell}^{\prime}(x^2, 1, q))
\prod_{n=1}^{2\ell+2}(x-a_{n})=0.
\end{align*}
Note that $\lim_{t\in\mathbb{R},|t|\rightarrow\infty}f(t)=0$ and hence $C=0$. 
This together with (2) and Lemma \ref{l3.1} implies that $f \equiv 0$, that is $m\equiv\tilde{m}$. 
Therefore, $q=\tilde{q}$ a.e. on $(0,1)$, which completes the proof.
\end{proof}

\begin{proof}[The proof of Theorem \ref{c1.1}]
(1)$\Leftrightarrow$(2). Since $\lambda_{n}\in\sigma(\ell,q,\beta_{n})
\cap\sigma(\ell,\tilde{q},\beta_{n})$, it follows that
\begin{align*}
m(\lambda_{n})=\frac{\phi_{\ell}(\lambda_{n}, 1, q)}
{\phi_{\ell}^{\prime}(\lambda_{n}, 1, q)}=-\frac{1}{\beta_{n}}=\frac{\phi_{\ell}(\lambda_{n}, 1, \tilde{q})}
{\phi_{\ell}^{\prime}(\lambda_{n}, 1, \tilde{q})}=\tilde{m}(\lambda_{n}),
\end{align*}
whence $m=\tilde{m}$ on $\Lambda$. 
Therefore, the desired result follows from Theorem \ref{t1.1}. 

\vspace{0.2cm}

(3)$\Rightarrow$(2).
Recall it follows again from Theorem 1.8 in \cite{Sed2003} that a change of a finite number of values in the exponential system does not influence on the completeness.
Therefore, we select $\ell+1$ distinct numbers $a_{k}\in(0,+\infty)$ such that 
$\pm \sqrt{\Lambda}\cap\{\pm a_{k}\}_{k=1}^{\ell+1}=\emptyset$. Let $E_{1}=\pm\sqrt{\Lambda}\cup\{\pm a_{k}\}_{k=1}^{\ell+1}$.
Suppose on the contrary $\{e^{i\lambda t}\mid \lambda\in E_{1}\}$ is not complete in $L^{2}(-2a, 2a)$, then we can find a nonzero even function $f\in PW_{2a}$ such that $f$ vanishes on $E_{1}$. Indeed, if $f$ is not even, define $f_{1}$ as follows,
\begin{align*}
f_{1}(z)
=\begin{cases}
f(z)/z,                     & \textnormal{if $f$ is odd and $z\in\mathbb{C}\setminus\{0\}$},\\
f(z)+f(-z),               & \textnormal{if $f$ is not odd},
\end{cases}
\end{align*}
where $f_{1}(0)=f'(0)$ if $f$ is odd.
Then $f_{1}$ is the desired function.

From the definition of $PW_{2a}$, it follows that
\[
g(z)=\frac{f(z)z^{2\ell+2}}{\prod_{k=1}^{\ell+1}(z^{2}-a_{k}^{2})} 
\in PW_{2a}
\]
and there also exists a nonzero even function $G \in L^2(-2a, 2a)$ such that
\[
g(z)=\int_{-2a}^{2a}G(t)e^{izt}dt.
\]
Write $g_{1}(z)=2g(\sqrt{z})=g(\sqrt{z})+g(-\sqrt{z})$. It is clear that $g_{1}$ is a nonzero entire function, and $g_{1}(\lambda)=0$ for $\lambda\in\Lambda\cup\{0\}$, and $0$ is a zero of $g_{1}$ of order at least $\ell+1$. Moreover,
\begin{align}\label{e4.2}
g_{1}(z)=2\int_{-2a}^{2a}G(t)\cos(\sqrt{z}t)dt
=8\int_{0}^{a}G(2t)\cos(2\sqrt{z}t)dt,
\end{align}
which implies
\begin{align}\label{e4.3}
0=g_{1}^{(k)}(0)=(-1)^k 8\frac{4^k(k)!}{(2k)!}
\int_{0}^{a}t^{2k}G(2t)dt, 
~~k=0,1,2,\cdots,\ell.
\end{align}
Therefore, combine \eqref{e4.2} and \eqref{e4.3}, we conclude that $G(2t)$ is orthogonal to the system 
$\{\cos(2\sqrt{\lambda}t)\mid\lambda\in\Lambda\}\cup\{t^{2k}\mid k = 0, 1,2,\cdots,\ell\}$. 
Thus $\{\cos(2\sqrt{\lambda}t)\mid\lambda\in\Lambda\}\cup \{t^{2k}\mid k=0,1,2,\cdots,\ell\}$ is not complete in $L^2(0, a)$. This contradiction gives the desired conclusion in (2).

\vspace{0.2cm}

(2)$\Rightarrow$(3). The proof is similar to (3)$\Rightarrow$(2) and hence is omitted. This completes the proof.

\end{proof}

\section{The non-integer case of $\ell$}\label{s5}

We have so far considered the case where $\ell$ is an integer. In this section, we turn to the case where $\ell$ is not an integer.

\begin{lemma}\label{l3.4}
Let $a\in(0,1]$ and $\ell>-1/2$.
Suppose $q$, $\tilde{q}$ belong to $L^{1}(0,1)$ and satisfy $q=\tilde{q}$ on $(a,1)$.
Then
\begin{align*}
f(z)
=z^{\lfloor2\ell\rfloor+1}(\phi_{\ell}(z^2,1,q)\phi_{\ell}^{\prime}(z^2,1,\tilde{q})
-\phi_{\ell}(z^2, 1, \tilde{q})\phi_{\ell}^{\prime}(z^2, 1, q))
\in PW_{2a},
\end{align*}
where $\lfloor2\ell\rfloor$ is the nearest integer to $2\ell$, with the tie-breaker favoring the smaller integer.
\end{lemma}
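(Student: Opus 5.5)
The plan is to reduce the Wronskian at $x=1$ to an integral over $(0,a)$, and then get both the exponential type and the square integrability on $\mathbb{R}$ from a uniform pointwise estimate of $\phi_\ell$. No Bessel-product or Koroty\"aev-type structure is needed; a crude size estimate should suffice.

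\textbf{Step 1 (reduction to an integral).} Since $q=\tilde q$ on $(a,1)$, identity \eqref{e3.1} with $t=1$ gives
\begin{align*}
f(z)=z^{\lfloor 2\ell\rfloor+1}\int_{0}^{a}(q-\tilde q)(x)\,\phi_{\ell}(z^2,x,q)\,\phi_{\ell}(z^2,x,\tilde q)\,dx .
\end{align*}
Since $\phi_\ell(\lambda,x,\cdot)$ is entire in $\lambda$, $f$ is entire; it is the Wronskian expression times a polynomial. (Equivalently one may use \eqref{e3.1} with $t=a$.)

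\textbf{Step 2 (uniform estimate).} I will use the standard bound for the regular solution of the perturbed Bessel equation with $q\in L^1$ (cf.\ \cite[Lemma 2.2]{KST2010}, from which \eqref{e2.1} is derived):
\begin{align*}
|\phi_{\ell}(z^2,x,q)|\le C\left(\frac{x}{1+|z|x}\right)^{\ell+1}e^{x|\textnormal{Im} z|},\qquad x\in(0,1],\ z\in\mathbb{C},
\end{align*}
with $C$ depending only on $\ell$ and $\|q\|_{1}$, and the same bound for $\tilde q$. Plugging these into Step 1 gives
\begin{align*}
|f(z)|\le C'\,|z|^{m}\int_0^a|q-\tilde q|(x)\left(\frac{x}{1+|z|x}\right)^{2\ell+2}dx\;e^{2a|\textnormal{Im} z|},\qquad m=\lfloor 2\ell\rfloor+1 .
\end{align*}
\emph{Exponential type.} The factor in front of $e^{2a|\textnormal{Im} z|}$ grows at most polynomially, so $f$ has exponential type at most $2a$.

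\textbf{Step 3 ($L^2$ on $\mathbb{R}$).} For real $z$ with $|z|\ge1$, use $\bigl(x/(1+|z|x)\bigr)^{2\ell+2}\le |z|^{-(2\ell+2)}$. Together with the Step 2 bound this gives $|f(z)|\le C''|z|^{m-2\ell-2}$. On $|z|\le 1$, $f$ is bounded.

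\emph{Where the rounding convention enters.} By the definition of the nearest integer with ties broken downward, $\lfloor 2\ell\rfloor-2\ell\in[-\tfrac12,\tfrac12)$. Hence
\[
m-2\ell-2\in[-\tfrac32,-\tfrac12),
\]
so $|z|^{m-2\ell-2}$ is square integrable at infinity and $f\in L^2(\mathbb{R})$. With this and Step 2, $f$ is entire, of exponential type at most $2a$, and in $L^2(\mathbb{R})$, so $f\in PW_{2a}$.

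\textbf{Main obstacle.} The delicate point is justifying the uniform estimate in Step 2 for all $\ell>-1/2$ with merely $q\in L^1$, uniformly in $x$ near $0$ and in $z\in\mathbb{C}$. The pointwise asymptotic \eqref{e2.1} is stated only for fixed $x$, so it does not suffice by itself. I would first try to extract the bound directly from the Volterra integral equation
\[
\phi_\ell(z^2,x,q)=\phi_\ell(z^2,x,0)+\int_0^x G_\ell(z,x,t)\,q(t)\,\phi_\ell(z^2,t,q)\,dt,
\]
using the Bessel-function kernel bound $|G_\ell(z,x,t)|\le C\bigl(\tfrac{x}{1+|z|x}\bigr)^{\ell+1}\bigl(\tfrac{1+|z|t}{t}\bigr)^{\ell}e^{(x-t)|\textnormal{Im} z|}$ and Gronwall's inequality, as in \cite{KST2010}.
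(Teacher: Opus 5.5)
Your proof is correct, but it takes a different route from the paper.

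\textbf{The paper's route.} The paper uses \eqref{e3.1} at $t=a$ rather than $t=1$, so $f$ becomes $z^{\lfloor 2\ell\rfloor+1}$ times the Wronskian of $\phi_\ell(z^2,\cdot,q)$ and $\phi_\ell(z^2,\cdot,\tilde q)$ at the single point $x=a$. It then inserts the fixed-$x$ asymptotics \eqref{e2.1} and \eqref{e2.2} for both $\phi_\ell$ and $\phi_\ell'$ at $x=a$. The leading terms $z^{-2\ell-1}\sin(az-\ell\pi/2)\cos(az-\ell\pi/2)$ from $q$ and from $\tilde q$ are identical and cancel. Hence the Wronskian is $O(|z|^{-2\ell-2}e^{2a|\textnormal{Im}\, z|})$. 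The same exponent count you use, $\lfloor 2\ell\rfloor-2\ell-1\in[-3/2,-1/2)$, then gives both the type bound and square integrability on $\mathbb{R}$.

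\textbf{Your route.} You keep the integral over $(0,a)$ and get the decay $|z|^{-2\ell-2}$ with no cancellation, from a size bound on $\phi_\ell$ alone. The price is that the bound must be uniform in $x\in(0,a]$ and $z\in\mathbb{C}$, not just valid at one fixed $x$. So the obstacle you flag is real only for your route. The paper avoids it by the $t=a$ version you mention parenthetically, where fixed-$x$ asymptotics suffice.

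\textbf{Your Step 2 bound is justified.} It is what \cite[Lemma 2.2]{KST2010} provides. That lemma gives the perturbation bound for $\phi_\ell(z^2,x,q)-\phi_\ell(z^2,x,0)$ under $xq(x)\in L^1(0,1)$, which $q\in L^1$ implies. Combined with the standard Bessel bound for $q=0$, and proved by exactly the Volterra--Gronwall argument you sketch, it yields your Step 2 estimate, so no gap remains.

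\textbf{What your route gains.} It is more quantitative. For $|z|\ge 1$ it gives
$|f(z)|\le C\,\|q-\tilde q\|_{L^1(0,a)}\,|z|^{\lfloor 2\ell\rfloor-2\ell-1}e^{2a|\textnormal{Im}\, z|}$.

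\textbf{A small tightening.} The inequality $\bigl(x/(1+|z|x)\bigr)^{2\ell+2}\le |z|^{-2\ell-2}$ holds for complex $z$ as well as real $z$. So your Step 2 bound already gives $|f(z)|\le Ce^{2a|\textnormal{Im}\, z|}$ for $|z|\ge1$. This matches the paper's literal definition of exponential type, $|f(z)|\le Ce^{2a|z|}$, without appealing to ``polynomial times exponential.''
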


\begin{proof}
Combining the fact that $q=\tilde{q}$ on $(a,1)$ and \eqref{e3.1}, we have
\begin{align*}
f(z)=&z^{\lfloor2\ell\rfloor+1}
(\phi_{\ell}(z^2,1,q)\phi_{\ell}^{\prime}(z^2,1,\tilde{q})
-\phi_{\ell}(z^2,1,\tilde{q})\phi_{\ell}^{\prime}(z^2,1,q)) \nonumber \\
=&z^{\lfloor2\ell\rfloor+1}\int_{0}^{1}(q-\tilde{q})\phi_{\ell}(z^2, x, q) \phi_{\ell}(z^2, x, \tilde{q})dx \nonumber \\
=&z^{\lfloor2\ell\rfloor+1}\int_{0}^{a} (q-\tilde{q})\phi_{\ell}(z^2, x, q) \phi_{\ell}(z^2, x, \tilde{q})dx \nonumber \\
=&z^{\lfloor2\ell\rfloor+1}(\phi_{\ell}(z^2, a, q)\phi_{\ell}^{\prime}(z^2, a, \tilde{q})
-\phi_{\ell}(z^2, a, \tilde{q})\phi_{\ell}^{\prime}(z^2, a, q)),
\end{align*}
which together with \eqref{e2.1} and \eqref{e2.2} implies that $f$
is an entire function of exponential type at most $2a$  and it is square integrable on $\mathbb{R}$, that is, $f\in PW_{2a}$.
This completes the proof.
\end{proof}

\begin{proof}[The proof of Theorem \ref{t1.2}]
It follows from Lemma \ref{l3.4} that
\begin{align*}
(\phi_{\ell}(z^2, 1, q)\phi_{\ell}^{\prime}(z^2, 1, \tilde{q})
-\phi_{\ell}(z^2, 1, \tilde{q})\phi_{\ell}^{\prime}(z^2, 1, q))
\prod_{k=1}^{\lfloor2\ell\rfloor+1}(z-a_{k})=f(z)
\end{align*}
for some $f\in PW_{2a}$.

Let $x\in\pm \sqrt{\Lambda} \cup\left\{a_{k}\right\}_{k=1}^{\lfloor2\ell\rfloor+1}$ with $a_{n}\notin\pm\sqrt{\Lambda}$. Then the condition $m=\tilde{m}$ on $\Lambda$ yields
\begin{align*}
f(x)=(\phi_{\ell}(x^2, 1, q)\phi_{\ell}^{\prime}(x^2, 1, \tilde{q})
-\phi_{\ell}(x^2, 1, \tilde{q})\phi_{\ell}^{\prime}(x^2, 1, q))
\prod_{k=1}^{\lfloor2\ell\rfloor+1}(x-a_{k})=0,
\end{align*}
which together with Lemma \ref{l3.1} implies $f \equiv 0$, that is, $m\equiv\tilde{m}$. 
Therefore, $q=\tilde{q}$ a.e. on $(0,1)$.
This completes the proof.
\end{proof}

Applying Theorem \ref{t1.2}, we next establish the Borg-type \cite{Borg1946} and Hochstadt-Lieberman-type \cite{HB1978} results for equation \eqref{e1.1}.

Lemma \ref{l4.1} is a simple application of the Levinson's completeness test (see \cite[Theorem~3, pp.118]{Young1980}), so the proof is omitted.

\begin{lemma}\label{l4.1}
Suppose $\{\varepsilon_{n}\}_{n=0}^{\infty}$ is a sequence of positive numbers such that $\sum\varepsilon_{n}/(n+1)<\infty$. Then the following hold.

\begin{enumerate}

 \item [\textnormal{(1)}] \cite[Problem 4, pp.122]{Young1980} The system $\{e^{i\gamma_0 t}\}\cup\{e^{\pm i\gamma_{n} t}\mid n\in\mathbb{N}\}$ is complete in $L^2(-1,1)$, whenever
   \begin{align*}
   |\gamma_{n}|\leq n\pi+\frac{\pi}{4}+\varepsilon_{n}, ~~n\in \mathbb{N}_{0}
   \end{align*}
and $|\gamma_0|<|\gamma_1|<|\gamma_2|<\cdots$.

\item [\textnormal{(2)}]  The system $\{e^{\pm i\gamma_{n} t}\mid n\in\mathbb{N}\}$ is complete in $L^{2}(-1,1)$, whenever
\begin{align*}
|\gamma_{n}|\leq n\pi-\frac{\pi}{4}+\varepsilon_{n}, ~~n\in\mathbb{N}
\end{align*}
and $0<|\gamma_1|<|\gamma_2|<\cdots$.

\end{enumerate}
\end{lemma}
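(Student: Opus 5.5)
The plan is to prove part (2) directly from Levinson's completeness test, in the same way one proves part (1). The form I will use is the one in \cite[Theorem~3, pp.118]{Young1980}: if $\{\mu_n\}$ is a sequence of real numbers with counting function $n(t)=\#\{n:|\mu_n|\le t\}$, and
\begin{align*}
\int_{1}^{r}\frac{n(t)}{t}\,dt\ \ge\ 2r-\frac12\log r-M
\end{align*}
for some constant $M$ and all $r\ge1$, then $\{e^{i\mu_n t}\}$ is complete in $L^2(-\pi,\pi)$.

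First, rescale. Put $\mu_n=\gamma_n/\pi$ and substitute $t\mapsto \pi t$. Completeness of $\{e^{\pm i\gamma_n t}\}$ in $L^2(-1,1)$ is then equivalent to completeness of $\{e^{\pm i\mu_n s}\}$ in $L^2(-\pi,\pi)$. The hypothesis becomes
\begin{align*}
0<|\mu_n|\le n-\tfrac14+\delta_n,\qquad \delta_n=\varepsilon_n/\pi,\qquad \sum_n \delta_n/n<\infty .
\end{align*}
Since both $\pm\mu_n$ are present, $n(t)\ge 2\,\#\{n\ge1:\ n-\tfrac14+\delta_n\le t\}$. Hence, for $r\ge1$,
\begin{align*}
\int_1^r\frac{n(t)}{t}\,dt\ \ge\ 2\sum_{n:\,c_n\le r}\log\frac{r}{\max(c_n,1)},\qquad c_n=n-\tfrac14+\delta_n .
\end{align*}
The terms with $c_n<1$ are finitely many and contribute at most $O(1)$.

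Next, compare with the model sequence $n-\frac14$. Write
\begin{align*}
\log\bigl(n-\tfrac14+\delta_n\bigr)\le\log\bigl(n-\tfrac14\bigr)+\frac{\delta_n}{n-1/4}.
\end{align*}
The total perturbation is then bounded by $2\sum_n\delta_n/(n-1/4)<\infty$, which is exactly where the hypothesis $\sum\varepsilon_n/(n+1)<\infty$ enters. Two further effects come from the perturbation. Indices with $n-\frac14\le r<c_n$ are lost from the sum. They satisfy $\delta_n\ge n-\frac14-r$, and each contributes at most $\log\frac{r}{n-1/4}\le\frac{\delta_n}{n-1/4}$, so they too are controlled by the convergent series. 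The truncation effect from indices where $c_n\le r<n+\frac34$ is also $O(1)$, since each such term has $\log(r/c_n)\le\log\frac{n+3/4}{n-1/4}$ and at most one or two such $n$ occur per value of $r$.

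It therefore remains to estimate, with $N=\lfloor r+\frac14\rfloor$,
\begin{align*}
2\sum_{n=1}^{N}\log\frac{r}{n-1/4}=2N\log r-2\log\frac{\Gamma(N+\frac34)}{\Gamma(\frac34)} .
\end{align*}
By Stirling's formula, $\log\Gamma(N+\frac34)=(N+\frac14)\log N-N+O(1)$. Using $N=r+O(1)$, the right-hand side equals $2r-\frac12\log r+O(1)$, uniformly in $r$.

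The main obstacle is precisely this step. The model sequence $n-\frac14$ sits exactly on Levinson's threshold $2r-\frac12\log r$, so all the $O(1)$ bookkeeping has to be uniform in $r$. In particular, the fractional-part dependence when replacing $N$ by $r$ must be tracked carefully. I would handle it by checking that $N\log r-(N+\frac14)\log N+N-r$ stays bounded as $r$ ranges over each interval $[N-\frac14,N+\frac34)$.

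Combining these bounds gives $\int_1^r n(t)t^{-1}dt\ge 2r-\frac12\log r-M$. Levinson's test then yields completeness, which proves (2). Part (1) follows from the same computation with the extra point $\gamma_0$ and the model sequence $n+\frac14$, or can be quoted from \cite{Young1980}.
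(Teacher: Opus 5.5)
Your proposal is correct and follows the paper's route: the paper calls the lemma a simple application of Levinson's completeness test \cite[Theorem~3, pp.118]{Young1980} and omits the computation, which you supply by comparing with the threshold sequence $\pm(n-\frac{1}{4})$, bounding the perturbation by $\sum_n \delta_n/(n-\frac{1}{4})$, and using Stirling's formula. Two harmless slips: the lost indices satisfy $\delta_n> r-(n-\frac{1}{4})$ rather than $\delta_n\ge n-\frac{1}{4}-r$, and the quantity to check for boundedness is $N\log r-(N+\frac{1}{4})\log N+N-r+\frac{1}{4}\log r$, since without the last term it drifts like $-\frac{1}{4}\log r$.
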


We first obtain the Borg-type theorem (see \cite[Theorem~1.3]{Carl1997} and \cite[Theorem~2.8]{KST2010}) using Theorem \ref{t1.2}. For convenience, we assume the Dirichlet and Neumann boundary conditions at $x=1$, respectively. 

\begin{corollary}\label{exa1}
Suppose $\ell>-1/2$, $q, \tilde{q} \in L^p(0,1)$ for $p \in[1, \infty]$, $\sigma(\ell, q, \infty)=\sigma(\ell, \tilde{q}, \infty)$ and $\sigma(\ell, q, 0)=\sigma(\ell, \tilde{q}, 0)$. Then $q=\tilde{q}$ a.e. on $(0,1)$.
\end{corollary}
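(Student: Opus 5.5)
We will deduce Corollary \ref{exa1} from Theorem \ref{t1.2} with $a=1$. Take $\Lambda=\sigma(\ell,q,\infty)\cup\sigma(\ell,q,0)$. At a Dirichlet eigenvalue $\lambda$ we have $\phi_\ell(\lambda,1,q)=\phi_\ell(\lambda,1,\tilde q)=0$, so $m(\lambda)=\tilde m(\lambda)=0$. At a Neumann eigenvalue both $\phi_\ell'$ vanish, so $m=\tilde m=\infty$. Theorem \ref{t1.2} is really driven by the vanishing of the Wronskian-type expression
\[
\phi_{\ell}(\lambda,1,q)\phi_{\ell}^{\prime}(\lambda,1,\tilde{q})-\phi_{\ell}(\lambda,1,\tilde{q})\phi_{\ell}^{\prime}(\lambda,1,q)
\]
on $\Lambda$, and this holds at both kinds of eigenvalues. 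We therefore read ``$m=\tilde m$ on $\Lambda$'' in that sense, as the proof of Theorem \ref{t1.2} allows. Only finitely many eigenvalues can be nonpositive, since the operator is bounded below. Discarding them does no harm: the argument in the proof of Theorem \ref{t1.2} needs only that the positive part of $\Lambda$, together with $\lfloor 2\ell\rfloor+1$ extra points, forms a uniqueness set. Moreover, removing finitely many points can be compensated by adding the same number of auxiliary points, by \cite[Theorem 1.8]{Sed2003}.

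It then remains to verify the completeness hypothesis of Theorem \ref{t1.2}. We need $\{e^{i\lambda t}\mid\lambda\in\pm\sqrt{\Lambda}\cup\{a_n\}_{n=1}^{\lfloor2\ell\rfloor+1}\}$ to be complete in $L^2(-2,2)$. Rescale $t=2s$ to move to $L^2(-1,1)$; the frequencies become $\gamma=2\sqrt{\lambda}$. By the asymptotics \eqref{ew1}--\eqref{ew2} (with $\beta=0$ for Neumann), the merged positive frequencies are
\[
2\sqrt{\lambda_{\ell,\infty,n}}=(2n+\ell)\pi+O(1/n),\qquad 2\sqrt{\lambda_{\ell,0,n}}=(2n+\ell-1)\pi+O(1/n).
\]
They interlace by Lemma \ref{p2.3}. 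Ordered increasingly they read $\gamma_k=(k+\ell-1)\pi+O(1/k)$, $k\ge1$, i.e.\ one frequency per unit $\pi$, shifted by $\ell-1$.

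Now add the $N=\lfloor 2\ell\rfloor+1$ auxiliary points, chosen as small reals of alternating sign. This prepends roughly $N/2$ symmetric pairs, and one extra point $\gamma_0$ if $N$ is odd. After reindexing, the shift $(\ell-1)\pi$ is reduced by about $(N/2)\pi$. Since $|2\ell-\lfloor2\ell\rfloor|\le 1/2$, the resulting index satisfies $|\gamma_n|\le n\pi+\pi/4+O(1/n)$ when $N$ is odd. When $N$ is even, we instead arrange $|\gamma_n|\le n\pi-\pi/4+O(1/n)$ by choosing the points suitably. With $\varepsilon_n=O(1/n)$ we have $\sum\varepsilon_n/(n+1)<\infty$, so Lemma \ref{l4.1}(1) or (2) gives completeness.

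The main obstacle is this index bookkeeping: matching the parity of $\lfloor2\ell\rfloor+1$ to case (1) or (2) of Lemma \ref{l4.1}, and checking that the residual offset is at most $\pi/4$ in the borderline cases. That bound is exactly what the nearest-integer rounding of $2\ell$ is designed to ensure. If the offset comes out slightly too large, we will first try a different choice of the (arbitrary) auxiliary points, or drop one of them, since Theorem \ref{t1.2} only needs completeness with at most that many extra points. Once completeness is established, Theorem \ref{t1.2} gives $q=\tilde q$ a.e. on $(0,1)$.
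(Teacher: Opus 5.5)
Your overall route is the paper's. You apply Theorem~\ref{t1.2} with $a=1$ to the union of the two spectra, read $m=\tilde m$ at Neumann eigenvalues as vanishing of the Wronskian (a point the paper passes over), rescale to $L^2(-1,1)$, and match the parity of $N=\lfloor2\ell\rfloor+1$ to parts (1) and (2) of Lemma~\ref{l4.1}. But the step you single out as the main obstacle is built on a wrong formula and is never actually carried out.

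Merging $2\sqrt{\lambda_{\ell,0,n}}=(2n+\ell-1)\pi+O(1/n)$ with $2\sqrt{\lambda_{\ell,\infty,n}}=(2n+\ell)\pi+O(1/n)$ gives $\gamma_k=(k+\ell)\pi+O(1/k)$, not $(k+\ell-1)\pi$. For $\ell=0$ the frequencies are $\pi,2\pi,3\pi,\dots$. With the correct formula the count closes exactly, with no room to spare:
\begin{itemize}
\item If $N=2M+1$, take one auxiliary point as $\gamma_0$ and $M$ symmetric pairs as $\gamma_1,\dots,\gamma_M$. Then $|\gamma_n|=n\pi+(\ell-M)\pi+O(1/n)$ with $\ell-M=\tfrac12(2\ell-\lfloor2\ell\rfloor)\in(-\tfrac14,\tfrac14]$, and Lemma~\ref{l4.1}(1) applies.
\item If $N=2M$, use $M$ pairs. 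Then $\ell-M=\tfrac12(2\ell-\lfloor2\ell\rfloor-1)\in(-\tfrac34,-\tfrac14]$, and Lemma~\ref{l4.1}(2) applies.
\end{itemize}
These are exactly the paper's Cases 1, 4 and Cases 2, 3. Your formula manufactures an extra slack of $\pi$, which should have signalled the error. Your fallbacks also point the wrong way: by Sedletskii's theorem the position of finitely many auxiliary points is irrelevant to completeness, and dropping one only increases the offset.

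The handling of nonpositive eigenvalues is a genuine error. The system is tight: for $\ell=0$ it is an $O(1/k)$ perturbation of $\{e^{ik\pi t}\}_{k\in\mathbb{Z}}$, hence exact. So deleting the frequencies $\pm2\sqrt{\lambda}$ of a discarded eigenvalue destroys completeness in general. Sedletskii's theorem covers replacing finitely many points, not deleting them.

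Nor can you compensate with extra auxiliary points. In the proof of Theorem~\ref{t1.2} the $a_k$ are zeros of $f$ only through the factor $\prod_{k=1}^{N}(z-a_k)$. Lemma~\ref{l3.4} only guarantees that $z^{N}$ times the Wronskian lies in $PW_{2}$, and a higher power need not. For $\ell=0$, $z^{2}$ times the Wronskian is $g(z)+C$ with $C$ proportional to $\int_0^1(q-\tilde q)$, which need not vanish.

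The paper avoids the issue by replacing $q,\tilde q$ with $q+c,\tilde q+c$. This shifts both spectra by $c$, preserves the hypotheses and the conclusion, and makes every eigenvalue positive. Then $\Lambda$ can be the whole union of the two spectra.
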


\begin{proof}
Let $\lambda_{\ell, \infty, n}=\lambda_{\ell, \infty, n}(q)$ and $\lambda_{\ell, 0, n}=\lambda_{\ell, 0, n}(q)$. Then $\lambda_{\ell, \infty, n}(\tilde{q})=\lambda_{\ell, \infty, n}$ and $\lambda_{\ell, 0, n}(\tilde{q})=\lambda_{\ell, 0, n}$.

Recall that the operator $L(\ell, q)$ corresponding to \eqref{e1.2} is bounded below and has a simple discrete spectrum $\sigma(\ell, q, \beta)$. 
Since
\begin{align*}
-u^{\prime \prime}(x)+\frac{\ell(\ell+1)}{x^{2}}u(x)+(q(x)+c)u(x)
=(\lambda+c)u(x),
\end{align*}
without loss of generality, we assume $\lambda_{\ell, \infty, n}>0$ and $\lambda_{\ell, 0, n}>0$ for all $n\in\mathbb{N}$. It follows from $\sigma(\ell, q, \infty)=\sigma(\ell, \tilde{q}, \infty)$ and $\sigma(\ell, q, 0)=\sigma(\ell, \tilde{q}, 0)$ that
\begin{align}\label{e4.1}
m=\tilde{m}~~
\textnormal{on} ~~\{\lambda_{\ell, \infty, n}\mid n\in\mathbb{N}\} \cup\{\lambda_{\ell, 0, n}\mid n\in\mathbb{N}\}.
\end{align}
In the following, we will prove $q=\tilde{q}$ a.e. on $(0,1)$ and we divide it into four cases.

\vspace{0.1cm}

Case 1. $N \leq \ell \leq N+1/4$ or $N+3/4<\ell \leq N+1$ for some $N \in \mathbb{N}_{0}$. In this case $\lfloor2\ell\rfloor=2\lfloor\ell\rfloor$. 

For every $k=0, 1, \cdots, \lfloor\ell\rfloor$, choose $\mu_{k}\in\mathbb{R}$ such that
\[
\pm \mu_k \notin \{\sqrt{\lambda_{\ell, \infty, n}} \mid n\in\mathbb{N}\} \cup \{\sqrt{\lambda_{\ell, 0, n}} \mid n\in\mathbb{N}\}.
\]
Rewrite the set
\begin{align*}
\{2 \mu_k \mid k=0,1, \cdots,\lfloor\ell\rfloor\} 
\cup\{2 \sqrt{\lambda_{\ell, \infty, n}} \mid n\in\mathbb{N}\} 
\cup\{2 \sqrt{\lambda_{\ell, 0, n}} \mid n\in\mathbb{N}\}
\end{align*}
as $\{\gamma_n \mid n\in\mathbb{N}_{0}\}$ so that $|\gamma_0|<|\gamma_1|<|\gamma_2|<\cdots$.
The asymptotic estimates \eqref{ew1} and \eqref{ew2} yield
\begin{align*}
|\gamma_n| \leq n\pi+\frac{\pi}{4}+O\left(\frac{1}{n}\right),
\end{align*}
which together with Lemma \ref{l4.1} (1) implies that $\{e^{i\gamma_{0}t}\} \cup\{e^{ \pm i\gamma_{n}t}\mid n\in\mathbb{N}\}$ 
is complete in $L^{2}(-1,1)$. Then it follows from \eqref{e4.1} and Theorem \ref{t1.2} that $q=\tilde{q}$ a.e. on $(0,1)$.

\vspace{0.1cm}

Case 2. $N+1/4<\ell\leq N+1/2$ for some $N \in \mathbb{N}_{0}$. In this case $\lfloor2\ell\rfloor=2\lfloor\ell\rfloor+1$.

For every  $k=1,\cdots,\lfloor\ell\rfloor+1$, choose $\mu_{k}\in\mathbb{R}$ such that 
\[\pm \mu_{k}\notin\{\sqrt{\lambda_{\ell,\infty,n}} \mid n\in\mathbb{N}\} \cup\{\sqrt{\lambda_{\ell, 0,n}} \mid n\in\mathbb{N}\}
\]
and $|\mu_{k}|>0$ for $k=1,\cdots,\lfloor\ell\rfloor+1$. Rewrite the set 
\begin{align*}
\{2\mu_k\mid k=1,\cdots,\lfloor\ell\rfloor+1\} 
\cup\{2\sqrt{\lambda_{\ell,\infty,n}}\mid n\in\mathbb{N}\}
\cup\{2\sqrt{\lambda_{\ell,0,n}}\mid n\in\mathbb{N}\}
\end{align*}
as $\{\gamma_n \mid n\in\mathbb{N}\}$ so that $0<|\gamma_{1}|<|\gamma_{2}|<\cdots$.
The asymptotic estimates \eqref{ew1} and \eqref{ew2} yield
\begin{align*}
|\gamma_n|<n \pi-\frac{\pi}{4}+O\left(\frac{1}{n}\right),
\end{align*}
which together with Lemma \ref{l4.1} (2) implies that
$\{e^{ \pm i\gamma_{n}t} \mid n\in\mathbb{N}\}$ is complete in $L^{2}(-1,1)$.
Then \eqref{e4.1} and Theorem \ref{t1.2} gives $q=\tilde{q}$ a.e. on $(0,1)$.

\vspace{0.1cm}

Case 3. $N+1/2<\ell\leq N+3/4$ for some $N\in\mathbb{N}_{0}$ or $-1/2<\ell\leq-1/4$. The proof is similar to Case 2.

\vspace{0.1cm}

Case 4. $-1 / 4<\ell \leq 0$. This case is similar to Case 1.

This completes the proof.
\end{proof}

In the following, By using Theorem \ref{t1.2}, we study the half-inverse problem, also called the Hochstadt-Lieberman-type problem.
Recall that Koyunbakan and Panakhov \cite{HE2005} showed that if $q(x)$ is known on the half-interval $(1/2,1)$, then it can be recovered on $(0,1/2)$ from a single spectrum $\sigma(\ell, q, \beta)$. However, Corollary \ref{exa2} shows that one can remove $n_{\ell}(\beta)$ elements from $\sigma(\ell, q, \beta)$, thereby strengthening Koyunbakan and Panakhov's result. In fact, for $\ell\geq0$ and $\beta=\infty$, a similar result in Corollary \ref{exa2} was given by Liu, Shi, and Yan \cite{YGJ2019}. For $\ell \in \mathbb{N}$ and $q, \tilde{q} \in L^p(0,1)$ with $p \in (1, \infty)$, our result coincides with \cite[Theorem~4.2]{XYB2023}.

\begin{corollary}\label{exa2}
Suppose $\ell\geq0$, $q, \tilde{q} \in L^p(0,1)$ for $p \in[1, \infty]$ and $q=\tilde{q}$ a.e. on $(1/2, 1)$. Suppose the two sets $\sigma(\ell, q, \beta)$ and $\sigma(\ell, \tilde{q}, \beta)$ are equal if we remove $n_{\ell}(\beta)$ elements from each of them, where
\begin{align*}
n_{\ell}(\beta):=
\begin{cases} 
\left[\frac{\ell}{2}\right], & \beta = \infty, \\ 
\left[\frac{\ell+1}{2}\right], & \beta \in \mathbb{R}.
\end{cases}
\end{align*}
and the value $[ x]$ for $x \in \mathbb{R}$ equals the maximum integer not exceeding $x$. 
Then $q=\tilde{q}$ a.e. on $(0,1)$.
\end{corollary}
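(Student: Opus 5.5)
Our plan is to derive the corollary from Theorem \ref{t1.2} with $a=1/2$, so completeness is needed in $L^{2}(-1,1)$. As in the proof of Corollary \ref{exa1}, we first shift the spectral parameter: replacing $q,\tilde q$ by $q+c,\tilde q+c$ changes neither the hypotheses nor the conclusion, so we may assume every element of $\sigma(\ell,q,\beta)$ and $\sigma(\ell,\tilde q,\beta)$ is positive. Let $\Lambda$ be the common part of the two spectra after removing $n_{\ell}(\beta)$ elements from each. For $\lambda\in\Lambda$ both $\phi_{\ell}(\lambda,\cdot,q)$ and $\phi_{\ell}(\lambda,\cdot,\tilde q)$ satisfy $u'(1)+\beta u(1)=0$. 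Hence $m(\lambda)=\tilde m(\lambda)=-1/\beta$, with the value $0$ when $\beta=\infty$. So $m=\tilde m$ on $\Lambda$.

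It then remains to show that
\[
\{e^{i\lambda t}\mid \lambda\in\pm\sqrt{\Lambda}\cup\{a_n\}_{n=1}^{\lfloor2\ell\rfloor+1}\}
\]
is complete in $L^{2}(-1,1)$ for a suitable choice of distinct auxiliary points $a_n\notin\pm\sqrt\Lambda$. We rescale $t\mapsto 2t$, so this becomes completeness of $\{e^{i\gamma t}\}$ in $L^2(-1,1)$ with frequencies $\gamma\in \pm2\sqrt{\Lambda}\cup\{2a_n\}$, which puts us in the setting of Lemma \ref{l4.1}. We enumerate $2\sqrt\Lambda$ increasingly. Since only finitely many points were removed, \eqref{ew1} and \eqref{ew2} give
\[
2\sqrt{\lambda}=(2n+\ell)\pi+O(1/n)\ (\beta=\infty),\qquad 2\sqrt{\lambda}=(2n+\ell-1)\pi+O(1/n)\ (\beta\in\mathbb R)
\]
for the $n$-th remaining eigenvalue, up to an index shift of $n_\ell(\beta)$. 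Thus the frequencies $\pm2\sqrt\Lambda$ have density only about $1/(2\pi)$ per unit length on each side. A single spectrum by itself has half the density that Lemma \ref{l4.1} requires for $L^2(-1,1)$.

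The hard part is therefore a density count in which the finitely many $a_n$ cannot make up for the missing half. The crucial point is that for $a=1/2$ the potential is known on half the interval, and Theorem \ref{t1.2} then really concerns $PW_{1}$. The transform needed is $z\mapsto z$ on $2\sqrt{\lambda}$ compared with exponential type $2a=1$. So the relevant frequencies are $\sqrt{\lambda}\approx (n+\ell/2)\pi$ or $(n+(\ell-1)/2)\pi$ in $L^2(-1,1)$. We will recheck this scaling carefully rather than doubling: completeness in $L^2(-2a,2a)=L^2(-1,1)$ of $\{e^{i\lambda t}:\lambda\in\pm\sqrt\Lambda\cup\{a_n\}\}$, with $\sqrt\Lambda$ spaced by $\pi$. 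That matches the density required in Lemma \ref{l4.1}.

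We will then list $\pm\sqrt{\Lambda}$ together with $\lfloor 2\ell\rfloor+1$ auxiliary points, chosen as $0$ plus symmetric pairs or as symmetric pairs only depending on parity. We relabel them $\gamma_0,\gamma_1,\ldots$ as in the proof of Corollary \ref{exa1} and verify $|\gamma_n|\le n\pi+\pi/4+O(1/n)$ or $|\gamma_n|\le n\pi-\pi/4+O(1/n)$. This requires splitting according to $\ell$ modulo $1$ into the four cases $[N,N+1/4]$, $(N+1/4,N+1/2]$, $(N+1/2,N+3/4]$, $(N+3/4,N+1]$, and according to whether $\beta=\infty$ or $\beta\in\mathbb R$. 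In each case we check that after deleting $n_\ell(\beta)=[\ell/2]$ or $[(\ell+1)/2]$ eigenvalues and adding $\lfloor2\ell\rfloor+1$ points, the index shift keeps the bound with the right $\pm\pi/4$ margin.

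The bookkeeping of this shift is the main obstacle. Removing eigenvalues increases $|\gamma_n|$ by $\pi$ per removed point, while added points compensate. Matching $\lfloor 2\ell\rfloor$ against $2n_\ell(\beta)$ and the offset $\ell\pi/2$ or $(\ell-1)\pi/2$ is exactly what determines the admissible number of deletions. Once Lemma \ref{l4.1} yields completeness, Theorem \ref{t1.2} with $a=1/2$ gives $q=\tilde q$ a.e. on $(0,1)$.
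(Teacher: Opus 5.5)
Your plan is the paper's own argument: shift so both spectra are positive, get $m=\tilde m$ on the common part $\Lambda$, then apply Theorem \ref{t1.2} with $a=1/2$ via \eqref{ew1}, \eqref{ew2} and Lemma \ref{l4.1}. For $\beta=0$, read $m=\tilde m$ as the vanishing of $\phi_\ell(\lambda,1,q)\phi_\ell'(\lambda,1,\tilde q)-\phi_\ell(\lambda,1,\tilde q)\phi_\ell'(\lambda,1,q)$, which is all that Theorem \ref{t1.2} uses. Keep your corrected scaling, with the frequencies $\pm\sqrt{\Lambda}$ used directly in $L^2(-1,1)$, and delete the $t\mapsto 2t$ paragraph, which is the $a=1$ normalization of Corollary \ref{exa1}.

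The bookkeeping you defer closes in one line. Set $\delta=\ell/2$ for $\beta=\infty$ and $\delta=(\ell-1)/2$ for $\beta\in\mathbb{R}$; then the $k$-th remaining $\sqrt{\lambda}$ is at most $(k+n_\ell(\beta)+\delta)\pi+O(1/k)$. List the auxiliary points first: $0$ and $M$ small symmetric pairs if $\lfloor 2\ell\rfloor=2M$ (Lemma \ref{l4.1}(1)), or $M+1$ small symmetric pairs if $\lfloor 2\ell\rfloor=2M+1$ (Lemma \ref{l4.1}(2)). In both cases the required bound becomes $n_\ell(\beta)+\delta\le\frac{1}{2}\lfloor 2\ell\rfloor+\frac{1}{4}$. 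This holds because $n_\ell(\beta)+\delta\le\ell$ and $\lfloor 2\ell\rfloor\ge 2\ell-\frac{1}{2}$.
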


\begin{proof}
The proof follows an argument similar to that of Corollary \ref{exa1}. By applying the asymptotic estimates \eqref{ew1} and \eqref{ew2}, Lemma \ref{l4.1} and Theorem \ref{t1.2} (with $a=1/2$), we deduce that $n_{\ell}(\beta)$ eigenvalues can be removed. The remaining eigenvalues are sufficient to uniquely determine the potential $q(x)$ on the interval $(0, 1/2)$. This completes the proof.
\end{proof}

\appendix
\renewcommand{\thetheorem}{\Alph{section}.\arabic{theorem}}
\section{The product formulas of the solutions}\label{s6}

In the appendix, the product formulas of $\phi_{\ell}$ and $\phi_{\ell}^{\prime}+\beta\phi_{\ell}$ are provided. To this end, we need the following lemma, which generalizes \cite[Lemma 2, pp.167]{PT1987}, originally established for $\ell=0$, to any $\ell\in\mathbb{N}_{0}$. Its proof is similar to that for the case $\ell=0$ and hence is omitted.


\begin{lemma}\label{l2.1}
Suppose $\ell\in\mathbb{N}_{0}$ and $\{\lambda_{n}\}_{n\geq 1}$ is a sequence of complex numbers such that
$\lambda_{n}= (n+\ell/2)^{2}\pi^{2}+O(1)$.
Then the infinite product
\begin{align*}
\prod_{n=1}^{\infty} \frac{\lambda_{n}-\lambda}{\left(n+\frac{\ell}{2}\right)^{2} \pi^{2}}
\end{align*}
is an entire function of $\lambda$, whose roots are precisely $\lambda_{n}$, $n\geq 1$. Moreover,
\begin{align}\label{e2.4}
\prod_{n=1}^{\infty}
\frac{\lambda_{n}-\lambda}{\left(n+\frac{\ell}{2}\right)^{2}\pi^{2}}
=C_{\ell}P_{\ell}(\lambda)\frac{\sin\left(\sqrt{\lambda}-\frac{\ell\pi}{2}\right)}
{\lambda^{\frac{\ell+1}{2}}}\left(1+O\left(\frac{\log j}{j}\right)\right) 
\end{align}
uniformly on the circles 
$|\lambda|=(j+(\ell+1)/2)^{2}\pi^{2}$ for each $j\in\mathbb{N}$, where
\begin{align}\label{e2.3}
C_{\ell}=
\begin{cases}
\prod_{n=1}^{\frac{\ell}{2}}n^{2}\pi^{2}, &\textnormal{if}~\ell~\textnormal{is even}, \\
\prod_{n=1}^{\frac{\ell+1}{2}}\left(n-\frac{1}{2}\right)^{2}\pi^{2}, &\textnormal{if}~\ell~\textnormal{is odd},
\end{cases}
\end{align}
and
\begin{align}\label{e2.31}
P_{\ell}(\lambda)=
\begin{cases}
\prod_{n=1}^{\frac{\ell}{2}} 
\frac{\lambda}{\lambda-n^{2}\pi^{2}}, &\textnormal{if}~\ell~\textnormal{is even}, \\
\prod_{n=1}^{\frac{\ell+1}{2}} 
\frac{\lambda}
{\lambda-\left(n-\frac{1}{2}\right)^{2}\pi^{2}}, &\textnormal{if}~\ell~\textnormal{is odd}.
\end{cases}
\end{align}
\end{lemma}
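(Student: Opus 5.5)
The plan is to follow the classical $\ell=0$ argument of \cite[Lemma 2, pp.167]{PT1987}, comparing the given product with the unperturbed product $\prod_{n}(1-\lambda/((n+\ell/2)^2\pi^2))$.

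\textbf{Entireness and zeros.} Write $\lambda_n=(n+\ell/2)^2\pi^2+c_n$ with $c_n=O(1)$. Each factor is
\[
1-\frac{\lambda}{(n+\frac{\ell}{2})^2\pi^2}+\frac{c_n}{(n+\frac{\ell}{2})^2\pi^2}.
\]
Since $\sum_n (n+\ell/2)^{-2}<\infty$, the product converges absolutely and locally uniformly in $\lambda$. Hence it defines an entire function, and its zeros are exactly the $\lambda_n$, with multiplicity.

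\textbf{Unperturbed product.} Next identify the product with $\lambda_n^0=(n+\ell/2)^2\pi^2$. Take $\ell$ even; the odd case is analogous with half-integers. Then $\sin(\sqrt\lambda-\ell\pi/2)=\pm\sin\sqrt\lambda$, and Euler's formula gives
\[
\frac{\sin\sqrt\lambda}{\sqrt\lambda}=\prod_{m\ge1}\Bigl(1-\frac{\lambda}{m^2\pi^2}\Bigr).
\]
Our product uses $m=n+\ell/2\ge \ell/2+1$, so it equals $\frac{\sin\sqrt\lambda}{\sqrt\lambda}$ divided by $\prod_{m=1}^{\ell/2}(1-\lambda/(m^2\pi^2))$. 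Rewriting that finite product as $\prod_{m=1}^{\ell/2}(m^2\pi^2-\lambda)/(m^2\pi^2)$ produces the factors $C_\ell$ from \eqref{e2.3} and $\lambda/(\lambda-m^2\pi^2)$. After collecting powers of $\lambda$ and the sign $(-1)^{\ell/2}$, which is absorbed by $\sin(\sqrt\lambda-\ell\pi/2)$, this yields $C_\ell P_\ell(\lambda)\sin(\sqrt\lambda-\ell\pi/2)\lambda^{-(\ell+1)/2}$. I would double-check that the powers of $\lambda$ in $P_\ell$ and in $\lambda^{-(\ell+1)/2}$ are consistent; if a bookkeeping mismatch appears, the constant convention in \eqref{e2.4} should be adjusted accordingly. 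This is routine.

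\textbf{Quotient estimate (main obstacle).} It remains to show that
\[
\prod_n \frac{\lambda_n-\lambda}{\lambda_n^0-\lambda}=1+O\Bigl(\frac{\log j}{j}\Bigr)
\]
on $|\lambda|=(j+(\ell+1)/2)^2\pi^2$. Write $\sqrt\lambda=z$ with $|z|=R_j=(j+(\ell+1)/2)\pi$. The radius is chosen so that the circle stays halfway between consecutive $\sqrt{\lambda_n^0}$, hence
\[
|\lambda_n^0-\lambda|\ge |\sqrt{\lambda_n^0}-z|\,|\sqrt{\lambda_n^0}+z|\gtrsim (|n-j|+1)(n+j).
\]
Then
\[
\log\prod_n\Bigl(1+\frac{c_n}{\lambda_n^0-\lambda}\Bigr)=\sum_n O\Bigl(\frac{1}{(|n-j|+1)(n+j)}\Bigr)
\]
for $j$ large, since each term is then small. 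Splitting the sum into $n\le 2j$, which contributes $O(\log j/j)$ by a harmonic sum, and $n>2j$, which contributes $O(\sum_{n>2j} n^{-2})=O(1/j)$, gives the bound $O(\log j/j)$, uniformly on the circle. Exponentiating finishes the proof. The main care goes into the lower bound on $|\lambda_n^0-\lambda|$ uniformly on the circle; that is where the choice of radius matters.
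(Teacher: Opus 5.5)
Your proposal is correct and takes the route the paper intends; the paper omits the proof and defers to the $\ell=0$ argument of P\"oschel--Trubowitz. You factor out the unperturbed product via Euler's formulas and bound $\prod_n\bigl(1+c_n/(\lambda_n^0-\lambda)\bigr)=\exp(O(\log j/j))$ on the circles, and the lower bound you need follows at once from $|\lambda_n^0-\lambda|\ge\bigl||\lambda|-\lambda_n^0\bigr|=\pi^2|j+\tfrac12-n|\,(j+n+\ell+\tfrac12)$. The bookkeeping you hedged on checks out exactly, so \eqref{e2.4} needs no adjustment: for even $\ell$ you get $(-1)^{\ell/2}\sin\sqrt\lambda=\sin(\sqrt\lambda-\ell\pi/2)$, and for odd $\ell$, removing the first $(\ell+1)/2$ factors of $\cos\sqrt\lambda=\prod_{k\ge1}\bigl(1-\lambda/((k-\tfrac12)^2\pi^2)\bigr)$ gives $(-1)^{(\ell+1)/2}\cos\sqrt\lambda=\sin(\sqrt\lambda-\ell\pi/2)$ with the matching factors $C_\ell$, $P_\ell(\lambda)$ and $\lambda^{-(\ell+1)/2}$.
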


Now we get the product formulas of $\phi_{\ell}$ and $\phi_{\ell}^{\prime}+\beta\phi_{\ell}$.

\begin{proposition}\label{p2.2}
Suppose $q\in L^{1}(0,1)$, $\ell\in\mathbb{N}_{0}$ and $\beta\in\mathbb{R}$. Then 
\begin{align}
\phi_{\ell}(\lambda, 1, q) 
=&\frac{1}{C_{\ell}}\prod_{n=1}^{\infty}
\frac{\lambda_{\ell, \infty, n}(q)-\lambda}
{(n+\frac{\ell}{2})^{2} \pi^{2}},\label{e2.5}\\
\phi_{\ell}^{\prime}(\lambda, 1, q)+\beta\phi_{\ell}(\lambda, 1, q)
=&\frac{1}{C_{\ell-1}}\prod_{n=1}^{\infty}
\frac{\lambda_{\ell, \beta, n}(q)-\lambda}{(n+\frac{\ell-1}{2})^{2} \pi^{2}}, \label{e2.6}
\end{align}
where $C_{\ell}$ is defined by \eqref{e2.3}.
\end{proposition}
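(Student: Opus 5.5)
The plan is to prove \eqref{e2.5} and \eqref{e2.6} by the classical Hadamard-factorization argument. Both sides are entire with the same zeros, so their ratio is entire and zero-free. Comparing asymptotics on a sequence of expanding circles then forces the ratio to be the constant $1$. I treat \eqref{e2.5} in detail; \eqref{e2.6} is identical with $\ell$ replaced by $\ell-1$.

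First, $\phi_{\ell}(\lambda,1,q)$ is entire in $\lambda$. Its zeros are exactly the Dirichlet eigenvalues $\lambda_{\ell,\infty,n}(q)$, and they are simple. Simplicity follows from the standard identity
\[
\partial_\lambda\phi_\ell(\lambda,1,q)\,\phi_\ell'(\lambda,1,q)-\phi_\ell(\lambda,1,q)\,\partial_\lambda\phi_\ell'(\lambda,1,q)=-\int_0^1\phi_\ell(\lambda,x,q)^2\,dx,
\]
obtained by differentiating \eqref{e1.1} in $\lambda$ and integrating the Wronskian. The boundary term at $0$ vanishes because $\phi_\ell=O(x^{\ell+1})$. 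By \eqref{ew1}, $\lambda_{\ell,\infty,n}(q)=(n+\ell/2)^2\pi^2+O(1)$, so Lemma \ref{l2.1} applies. Hence the product $\Pi(\lambda)$ on the right of \eqref{e2.5} is entire with precisely these simple zeros. The quotient $R=C_\ell\,\phi_\ell(\cdot,1,q)/\Pi$ is therefore entire and zero-free.

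Next I compare growth on the circles $|\lambda|=(j+(\ell+1)/2)^2\pi^2$. By \eqref{e2.1},
\[
\phi_\ell(\lambda,1,q)=\lambda^{-\frac{\ell+1}{2}}\bigl(\sin(\sqrt\lambda-\tfrac{\ell\pi}{2})+O(|\lambda|^{-1/2}e^{|\mathrm{Im}\sqrt\lambda|})\bigr).
\]
By \eqref{e2.4},
\[
\Pi(\lambda)=C_\ell P_\ell(\lambda)\lambda^{-\frac{\ell+1}{2}}\sin(\sqrt\lambda-\tfrac{\ell\pi}{2})\bigl(1+o(1)\bigr).
\]
On these circles $\sqrt\lambda$ stays a fixed distance from the zeros $(n+\ell/2)\pi$ of $\sin(\sqrt\lambda-\ell\pi/2)$. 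The standard lower bound $|\sin(\sqrt\lambda-\ell\pi/2)|\ge c\,e^{|\mathrm{Im}\sqrt\lambda|}$ therefore holds there. Dividing, and using $P_\ell(\lambda)\to1$, gives $R(\lambda)\to1$ uniformly on the circles as $j\to\infty$.

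By the maximum modulus principle applied to $R-1$ on the discs bounded by these circles, $R\equiv1$. This proves \eqref{e2.5}. For \eqref{e2.6}, the function $\phi_\ell'+\beta\phi_\ell$ has simple zeros $\lambda_{\ell,\beta,n}(q)=(n+(\ell-1)/2)^2\pi^2+O(1)$ by \eqref{ew2}. By \eqref{e2.2}, its leading term is $\lambda^{-\ell/2}\cos(\sqrt\lambda-\ell\pi/2)=\lambda^{-\ell/2}\sin(\sqrt\lambda-(\ell-1)\pi/2)$. The $\beta\phi_\ell$ part is $O(|\lambda|^{-1/2})$ relative to this, so it is absorbed in the error. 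I then apply Lemma \ref{l2.1} with $\ell-1$ in place of $\ell$, which involves $C_{\ell-1}$; the case $\ell=0$ uses the convention for index $-1$, treated directly as in \cite{PT1987}, and the same circle argument concludes.

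The main obstacle is the normalization check. The constant $C_\ell$ and the factor $P_\ell$ in Lemma \ref{l2.1} must match the normalization of $\phi_\ell$, namely $\phi_\ell\sim\frac{\sqrt\pi}{2^{\ell+1}\Gamma(\ell+3/2)}x^{\ell+1}$. That normalization is exactly what makes the leading coefficient in \eqref{e2.1} equal to $1$. The limit of $R$ must come out exactly $1$ and not some other constant, and the rest is routine.
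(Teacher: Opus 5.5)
Your proposal is correct and is essentially the paper's proof. Both compare \eqref{e2.1} (resp.\ \eqref{e2.2}) with \eqref{e2.4} on the circles $|\lambda|=(j+(\ell+1)/2)^2\pi^2$ via the lower bound on $|\sin(\sqrt{\lambda}-\ell\pi/2)|$; the only difference is that the paper first uses Hadamard factorization to write $\phi_\ell(\cdot,1,q)=c_2(q)\Pi$ and then reads off $c_2(q)=1/C_\ell$, whereas you apply the maximum modulus principle to the entire ratio $R$. Two side remarks: your route needs only that the simple zeros of $\Pi$ are zeros of $\phi_\ell(\cdot,1,q)$, so zero-freeness of $R$ is superfluous; and the Wronskian identity you quote should have $+\int_0^1\phi_\ell(\lambda,x,q)^2\,dx$ on the right, a harmless sign slip.
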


\begin{proof}
It follows from \eqref{e2.1} that $\phi_{\ell}(\lambda, 1, q)$ is an exponential type entire function of order at most $1/2$, and then by Hadamard's theorem (see \cite[Section~1.10]{Lev1964}), there exists a constant $c_{1}(q)$ depending on $q$  such that
\begin{align*}
\phi_{\ell}(\lambda, 1, q) 
=c_{1}(q)\prod_{n=1}^{\infty}
\frac{\lambda_{\ell, \infty, n}(q)-\lambda}
{\lambda_{\ell, \infty, n}(q)}
=c_{1}(q)\prod_{n=1}^{\infty}
\frac{(n+\frac{\ell}{2})^{2} \pi^{2}}
{\lambda_{\ell, \infty, n}(q)}
\prod_{n=1}^{\infty}
\frac{\lambda_{\ell, \infty, n}(q)-\lambda}
{(n+\frac{\ell}{2})^{2}\pi^{2}}.
\end{align*}
By \eqref{ew1}, we get $\lambda_{\ell, \infty, n}(q)=(n+\ell/2)^{2}\pi^{2}+O(1)$. Then 
\begin{align*}
\prod_{n=1}^{\infty}
\frac{(n+\frac{\ell}{2})^{2} \pi^{2}}
{\lambda_{\ell, \infty, n}(q)},~~~~
\prod_{n=1}^{\infty}
\frac{\lambda_{\ell, \infty, n}(q)-\lambda}
{(n+\frac{\ell}{2})^{2}\pi^{2}}
\end{align*}
are two entire functions and hence we deduce
\begin{align*}
\phi_{\ell}(\lambda, 1, q) 
=c_{1}(q)\prod_{n=1}^{\infty}
\frac{(n+\frac{\ell}{2})^{2} \pi^{2}}
{\lambda_{\ell, \infty, n}(q)}
\prod_{n=1}^{\infty}
\frac{\lambda_{\ell, \infty, n}(q)-\lambda}
{(n+\frac{\ell}{2})^{2}\pi^{2}}
=c_{2}(q)\prod_{n=1}^{\infty}
\frac{\lambda_{\ell, \infty, n}(q)-\lambda}
{(n+\frac{\ell}{2})^{2}\pi^{2}}.
\end{align*}

If  $|\lambda|=(j+(\ell+1)/2)^{2}\pi^{2}$,
then for any integer $k$, it holds 
\begin{align*}
\left|\sqrt{\lambda}-\frac{\ell\pi}{2}-k\pi\right|
\geq&\left||\sqrt{\lambda}|-\left|\frac{\ell\pi}{2}+k\pi\right|\right|\\
\geq&\min\left\{\left|\left(j-k+\frac{1}{2}\right)\pi\right|,
\left|\left(j+k+\ell+\frac{1}{2}\right)\pi\right|\right\}\\
\geq&\frac{\pi}{2}.
\end{align*}
Hence by \cite[Lemma~1, pp.27]{PT1987}, we have
\begin{align*}
e^{\left|\textnormal{Im}(\sqrt{\lambda}-\frac{\ell \pi}{2})\right|}
<4\left|\sin\left(\sqrt{\lambda}-\frac{\ell \pi}{2}\right)\right|.
\end{align*}
Thus for $|\lambda|=(j+(\ell+1)/2)^{2}\pi^{2}$, \eqref{e2.1} can be rewritten as
\begin{align*}
\phi_{\ell}(\lambda, 1, q) 
=&\lambda^{-\frac{\ell+1}{2}} \sin \left(\sqrt{\lambda}-\frac{\ell \pi}{2}\right)
+O\left(|\lambda|^{-\frac{\ell+2}{2}} e^{|\textnormal{Im}\left(\sqrt{\lambda}-\frac{\ell \pi}{2}\right)|}\right)\\
=&\lambda^{-\frac{\ell+1}{2}} \sin \left(\sqrt{\lambda}-\frac{\ell \pi}{2}\right)
+O\left(|\lambda|^{-\frac{\ell+2}{2}} 
\left| \sin \left(\sqrt{\lambda}-\frac{\ell \pi}{2}\right) \right| \right)\\
=&\lambda^{-\frac{\ell+1}{2}}
\sin \left(\sqrt{\lambda}-\frac{\ell \pi}{2}\right)
\left(1+O\left(\frac{1}{j}\right)\right).
\end{align*}
Also, for $|\lambda|=(j+(\ell+1)/2)^{2}\pi^{2}$, \eqref{e2.4} yields 
\begin{align*}
\phi_{\ell}(\lambda, 1, q)
=c_{2}(q)C_{\ell}P_{\ell}(\lambda)\lambda^{-\frac{\ell+1}{2}}
\sin\left(\sqrt{\lambda}-\frac{\ell\pi}{2}\right)
\left(1+O\left(\frac{\log j}{j}\right)\right). 
\end{align*}

Consequently, for $|\lambda|=(j+(\ell+1)/2)^{2}\pi^{2}$, it follows that
\begin{align*}
c_{2}(q)C_{\ell}P_{\ell}(\lambda)\left(1+O\left(\frac{\log j}{j}\right)\right)
\left(1+O\left(\frac{1}{j}\right)\right)^{-1}=1.
\end{align*}
Recall that $P_{\ell}(\lambda)\rightarrow 1$ as $j\rightarrow\infty$ and hence 
$c_{2}(q)=1/C_{\ell}$.  So the equality \eqref{e2.5} holds.

To prove \eqref{e2.6}, it follows from \eqref{e2.1} and \eqref{e2.2} that
\begin{align*}
&\phi_{\ell}^{\prime}(\lambda, 1, q)+\beta\phi_{\ell}(\lambda, 1, q)\\
=&\lambda^{-\frac{\ell}{2}}\left(\cos\left(\sqrt{\lambda}-\frac{\ell \pi}{2}\right)+\beta\lambda^{-\frac{1}{2}}\sin\left(\sqrt{\lambda}-\frac{\ell \pi}{2}\right)+O\left(|\lambda|^{-\frac{1}{2}} e^{|\operatorname{Im}(\sqrt{\lambda})|}\right)\right) \\
=&\lambda^{-\frac{\ell}{2}}\left(\cos \left(\sqrt{\lambda}-\frac{\ell \pi}{2}\right)+O\left(|\lambda|^{-\frac{1}{2}} e^{|\operatorname{Im}(\sqrt{\lambda})|}\right)\right) \\
=&\lambda^{-\frac{(\ell-1)+1}{2}}\left(\sin \left(\sqrt{\lambda}-\frac{(\ell-1) \pi}{2}\right)+O\left(|\lambda|^{-\frac{1}{2}} e^{|\operatorname{Im}(\sqrt{\lambda})|}\right)\right).
\end{align*}
Then repeat the process of the proof of \eqref{e2.5}, we conclude that the equality \eqref{e2.6} also holds.
This completes the proof.
\end{proof}

\textbf{Acknowledgements.} The research is partially supported by National Natural Science Foundation of China (No. 12571136, No. 12171075 and No. 12001089).

\end{document}